\DeclareMathOperator{\Sl}{Sl}
\DeclareMathOperator{\Hom}{Hom}
\DeclareMathOperator{\Heis}{Heis}
\DeclareMathOperator{\res}{res}
\DeclareMathOperator{\uf}{uf}
\DeclareMathOperator{\hd}{hd}
\DeclareMathOperator{\Ima}{Im}
  \newcommand{\N}{\ensuremath{\mathbb{N}}}
  \newcommand{\Z}{\ensuremath{\mathbb{Z}}}
  \newcommand{\R}{\ensuremath{\mathbb{R}}}
 \newcommand{\fa}[1]{\forall_{#1}\;\;\;}
 \newcommand{\exi}[1]{\exists_{#1}\;\;\;}
 \newcommand*{\longhookrightarrow}{\ensuremath{\lhook\joinrel\relbar\joinrel\rightarrow}}
\begin{document}
\title{Uniformly finite homology and amenable groups}
\author{Matthias Blank
}
 \email{
  \texttt{matthias.blank@mathematik.uni-regensburg.de}
 }
 \address{
Fakult\"at f\"ur Mathematik,
Universit\"at Regensburg,
93040 Regensburg,
Germany
 }
 \author{Francesca Diana
 }
 \email{
   \texttt{francesca.diana@mathematik.uni-regensburg.de}
 }
 \address{
Fakult\"at f\"ur Mathematik,
Universit\"at Regensburg,
93040 Regensburg,
Germany
}
\address{
Graduiertenkolleg ``Curvature, Cycles and Cohomology''
 }
\subjclass[2010]{Primary 20J05; Secondary 43A07}
\ 
\date{\today}
 \begin{abstract}
Uniformly finite homology  is a coarse invariant for metric spaces; in particular, it is a quasi-isometry invariant for finitely generated groups. In this article, we study uniformly finite homology of finitely generated amenable groups and prove that it is infinite dimensional in many cases. The main idea is to use different transfer maps to distinguish between classes in uniformly finite homology. Furthermore we show that there are infinitely many classes in degree zero that cannot be detected by means.

\end{abstract}
\maketitle

\renewcommand{\labelenumi}{(\roman{enumi})}
\newtheorem{Satz}{Satz}[section]
\newtheorem{thm}[Satz]{Theorem}
\newtheorem*{theorem}{Theorem}
\newtheorem{prop}[Satz]{Proposition}
\newtheorem{cor}[Satz]{Corollary}
\newtheorem*{Ziel}{Ziel}
\newenvironment{bew}{\begin{proof}[Beweis:]}{\end{proof}}
\newtheorem*{hilfs}{Hilfsbehauptung}
\theoremstyle{definition}
\newtheorem{bsp}[Satz]{Beispiel}
\newtheorem{exa}[Satz]{Example}
\newtheorem{algo}[Satz]{Algorithmus}
\newtheorem{exc}{"Ubungsaufgabe}
\newtheorem{def.}[Satz]{Definition}
\newtheorem*{beh}{Behauptung}
\newtheorem*{bem}{Bemerkung}
\newtheorem*{mot}{Motivation}
\newtheorem*{erin}{Erinnerung}
\newtheorem*{rec}{Recall}
\newtheorem*{as}{Assumption}
\newtheorem{rem}[Satz]{Remark}
\newtheorem{lemma}[Satz]{Lemma}
\lstset{numbers = left, numberstyle = \tiny, numbersep = 5pt}
\lstset{language=C}

\section{Introduction}

Uniformly finite homology was introduced by Block and Weinberger \cite{BW} to study the large-scale structure of metric spaces having bounded geometric complexity. It is a coarse invariant in the sense that two quasi-isometric metric spaces have isomorphic uniformly finite homology. The chains considered are the one introduced by Roe \cite{R} to define coarse homology, but with an additional boundedness condition on the coefficients (Definition~\ref{d:uf}). 

The uniformly finite homology groups of a metric space $X$ are denoted by $H_*^{\uf}(X;\R)$. One of the main properties of $H_0^{\uf}(X;\R)$ is the relation with the notion of amenability for spaces having coarse bounded geometry (in particular for finitely generated groups). The following result is due to Block and Weinberger~\cite[Theorem 3.1]{BW}:
\begin{thm}
If $X$ is a metric space with coarse bounded geometry, then $X$ is amenable if and only if $H_0^{\uf}(X;\R)\neq 0$.
\end{thm}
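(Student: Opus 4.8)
The plan is to prove the two implications separately, recalling first that in degree zero a uniformly finite chain is simply a bounded function $c=\sum_{x\in X}c_x\,x$ with $\sup_x|c_x|<\infty$, so that $C_0^{\uf}(X;\R)$ is the space of bounded real functions on $X$, and that the boundary $\partial_1\colon C_1^{\uf}(X;\R)\to C_0^{\uf}(X;\R)$ sends a $1$-chain $f$ to its ``divergence'', the function $z\mapsto\sum_x f_{(x,z)}-\sum_y f_{(z,y)}$. I will also use the Følner characterisation of amenability for spaces of coarse bounded geometry: $X$ is amenable iff there is a sequence of nonempty finite subsets $(F_n)$ with $|\partial_R F_n|/|F_n|\to0$ for every $R>0$, and $X$ is non-amenable iff there exist $R>0$ and $\varepsilon>0$ with $|\partial_R F|\ge\varepsilon|F|$ for every finite $F\subseteq X$.

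For the implication ``$X$ amenable $\Rightarrow H_0^{\uf}(X;\R)\neq0$'' I would exhibit an explicit functional that is nonzero on the fundamental class $[X]=\sum_{x\in X}1\cdot x$ but vanishes on all boundaries. Fixing a Følner sequence $(F_n)$ and a nonprincipal ultrafilter $\omega$ on $\N$, set
\[
\phi(c)=\lim_{\omega}\frac{1}{|F_n|}\sum_{x\in F_n}c_x ,\qquad c\in C_0^{\uf}(X;\R).
\]
This is a well-defined bounded functional with $\phi([X])=1$. The point is that $\phi$ annihilates $\im\partial_1$: if $f\in C_1^{\uf}(X;\R)$ has propagation at most $R$ and coefficients bounded by $M$, then averaging its divergence over $F_n$ makes all edges internal to $F_n$ cancel, leaving only the edges crossing $\partial_R F_n$, so that the average is at most $CM\,|\partial_R F_n|/|F_n|\to0$ for a geometric constant $C$ coming from the bounded geometry. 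Hence $[X]$ is not a boundary and $H_0^{\uf}(X;\R)\neq0$.

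For the converse I argue by contraposition and show that if $X$ is non-amenable then every bounded $0$-chain is a boundary, so $H_0^{\uf}(X;\R)=0$. Fix $R,\varepsilon$ as above. The task is to realise a prescribed bounded divergence by a uniformly bounded, finite-propagation $1$-chain, i.e.\ to solve $\partial_1 f=c$ with $f\in C_1^{\uf}(X;\R)$. I would phrase this as a flow problem on the graph with edge set $\{(x,y):d(x,y)\le R\}$ and produce $f$ as a weak-$*$ limit of flows built on a finite exhaustion of $X$; on each finite piece the required flow is supplied by a max-flow/min-cut argument (equivalently a defect form of Hall's marriage theorem), where the uniform expansion $|\partial_R F|\ge\varepsilon|F|$ guarantees that every cut is large enough to carry the mass, yielding flows whose values are bounded independently of the piece.

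The main obstacle is precisely this last step: converting the qualitative failure of amenability into a \emph{uniform} bound on the flow while simultaneously controlling its propagation, and then passing to a limit over the exhaustion to obtain a single globally defined $f\in C_1^{\uf}(X;\R)$. Treating general signed and non-constant bounded chains, rather than only $[X]$, also requires that the Hall/max-flow estimate depend only on $\varepsilon$, $R$ and $\|c\|_\infty$ and not on the chosen finite piece; this is where the bounded-geometry hypothesis and the uniform Følner failure must be exploited carefully, in the spirit of bounded-equidecomposability results for non-amenable spaces.
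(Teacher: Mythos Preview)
The paper does not supply its own proof of this theorem: it is quoted (twice, in the introduction and in Section~\ref{notation}) as a result of Block and Weinberger with a reference to~\cite[Theorem~3.1]{BW}, and no argument is given. So there is nothing in the paper to compare your proposal against.

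That said, your outline is the standard one and is essentially how the original proof goes. The forward direction is complete as you wrote it: the F\o lner-averaged ultrafilter limit is exactly the invariant mean that detects the fundamental class, and your cancellation estimate for boundaries is correct, with the bounded-geometry constant entering as you indicate. For the converse, your plan (isoperimetric inequality $\Rightarrow$ max-flow/min-cut or Hall-type matching $\Rightarrow$ uniformly bounded, bounded-propagation primitive, then weak-$*$ limit over a finite exhaustion) is precisely the Block--Weinberger/Whyte mechanism; the only caveat is the one you already flag, namely that the combinatorial lemma must be stated so that the bound on the flow depends only on $\varepsilon$, $R$, $\|c\|_\infty$ and the bounded-geometry constants. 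Once that uniform version of Hall/max-flow is in hand, the limiting step is routine. In short: your proposal is correct as a sketch and matches the original source rather than anything proved in this paper.
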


The zero degree uniformly finite homology group has been used for a number of different applications. For instance, rigidity problems for uniformly discrete metric spaces with bounded geometry have been investigated with the help of uniformly finite homology \cite{D,W}. Moreover vanishing classes in $H_0^{\uf}(X;\R)$ have been used to construct aperiodic tilings for non-amenable manifolds \cite{BW}.

So most of the applications of uniformly finite homology use the fact that $H_0^{\uf}(X;\R)$ vanishes for a non-amenable metric space with bounded geometry. On the other hand, it is not so clear how $H_0^{\uf}(X;\R)$ looks like in the amenable case. Moreover, higher degree uniformly finite homology groups are not yet well understood \cite{BW1, DR3}. In the case that $G$ is a finitely generated group there is a ``lifting'' map $H_*(G;\R)\longrightarrow H_*^{\uf}(G;\R)$, which is injective if $G$ is amenable \cite{ABW, AT}. Dranishnikov~\cite{DR1, DR2} uses this map to study large-scale notions of dimension.

In this paper, we will show that uniformly finite homology groups of amenable groups are infinite dimensional in many cases. Our main result is:

\setcounter{section}{3}
\setcounter{Satz}{7}
\begin{thm}
Let $n\in\N$ and let $G$ be a finitely generated amenable group. Let $H\leq G$ be a subgroup such that $[G:H]=\infty$ and that the map
\begin{align*}
 H_n(i)\colon H_n(H;\R)\longrightarrow H_n(G;\R)
\end{align*}
induced by the inclusion $i:H\longhookrightarrow G$ is non-trivial. Then $\dim_\R(H_n^{\uf}(G;\R)) = \infty$. 
\end{thm}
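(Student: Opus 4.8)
The plan is to manufacture, out of the single class supplied by the hypothesis, an infinite family of uniformly finite cycles indexed by the cosets of $H$, and then to separate these classes by a matching family of functionals built from a cohomology class on $G$. Throughout I use that a subgroup of an amenable group is amenable, so $H$ is amenable and both lifting maps $c_G\colon H_n(G;\R)\to H_n^{\uf}(G;\R)$ and $c_H\colon H_n(H;\R)\to H_n^{\uf}(H;\R)$ are injective. Fix $\alpha\in H_n(H;\R)$ with $H_n(i)(\alpha)\neq 0$; dually, since over a field $H^n(G;\R)\cong\Hom_\R(H_n(G;\R),\R)$, choose $\phi\in H^n(G;\R)$ with $\langle\phi,H_n(i)(\alpha)\rangle\neq 0$, equivalently $\langle i^*\phi,\alpha\rangle\neq 0$.

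First I would build the cycles. Represent $\alpha$ by a cycle $w$ in the bar complex of $H$ and form its $H$-orbit sum $\Phi_H(w)$, a uniformly finite $n$-cycle supported on tuples in $H^{n+1}$ whose class is $c_H(\alpha)$. Since $[G:H]=\infty$ I may pick an infinite set $\{t_i\}_{i\in I}$ of coset representatives; because left translation is an isometry, each $t_i\cdot\Phi_H(w)$ is again a uniformly finite cycle in $C_n^{\uf}(G;\R)$, supported on a single coset, and for every $S\subseteq I$ the sum
\[
 z_S \;=\; \sum_{i\in S} t_i\cdot\Phi_H(w)
\]
is a legitimate uniformly finite cycle: coefficients and propagation stay bounded, and the supports over distinct cosets are disjoint. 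The choice $S=I$ recovers the full orbit sum, i.e.\ the class $c_G\big(H_n(i)(\alpha)\big)$, while the single-coset cycles $z_{\{i\}}$ have density zero and are therefore invisible to invariant means.

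Next I would detect the coset index. Working in the description $H_*^{\uf}(G;\R)\cong H_*(G;\linfg)$, under which $c_G$ is induced by the inclusion of constants and $z_S$ carries the coefficient $\chi_S$ recording the chosen cosets, I would construct for each $i\in I$ a functional $F_i$ on $H_n^{\uf}(G;\R)$ that restricts a cycle to the $i$-th coset, transfers it back to $H$, and pairs with $i^*\phi$. The behaviour I expect is
\[
 F_i(z_S) \;=\; \chi_S(i)\cdot\langle\phi,H_n(i)(\alpha)\rangle ,
\]
so that $F_i(z_{\{j\}})=\delta_{ij}\,\langle\phi,H_n(i)(\alpha)\rangle$. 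As $\langle\phi,H_n(i)(\alpha)\rangle\neq 0$, the $F_i$ form a dual family: any finite relation $\sum_k a_k z_{\{i_k\}}=0$ is annihilated by applying $F_{i_k}$, forcing every $a_k=0$. Hence $\{z_{\{i\}}\}_{i\in I}$ is linearly independent, and since $I$ is infinite we obtain $\dim_\R H_n^{\uf}(G;\R)=\infty$.

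The main obstacle is the construction and, above all, the well-definedness of the functionals $F_i$ on homology. For an infinite-index subgroup there is no naive transfer $H_n(G;\R)\to H_n(H;\R)$, and the relevant classes are precisely the density-zero ones that no mean-value functional can see; the separation must therefore genuinely exploit the coefficient module $\linfg$ rather than any mean on $G$. The delicate points are that the restrict-transfer-pair construction descends to $H_n^{\uf}(G;\R)$—i.e.\ that it kills the boundaries coming from \emph{all} of $G$, not merely from within a single coset, which is where the $\ell^\infty$-coefficient formalism and an $H$-mean enter—and that the resulting value is nonzero. This last non-triviality is exactly where the hypothesis $H_n(i)(\alpha)\neq 0$, rather than the weaker $\alpha\neq 0$, is essential: the admissible functionals on $H_n^{\uf}(G;\R)$ originate from global data on $G$ (the class $\phi$), and such data sees $\alpha$ only through its image $H_n(i)(\alpha)$.
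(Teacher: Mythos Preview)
Your scheme has a genuine gap, and in fact the family $\{z_{\{i\}}\}_{i\in I}$ of single-coset classes is the wrong family: in basic examples these classes all vanish, so no functionals $F_i$ with the property you want can exist. Take $G=\Z^2$, $H=\Z\times\{0\}$, $n=1$, and $\alpha$ the generator of $H_1(H;\R)$. Then $z_{\{i\}}$ is the horizontal-line cycle at height $i$; but the uniformly finite $2$-chain that triangulates the half-plane $\{(x,y)\mid y\geq i\}$ is a legitimate element of $C_2^{\uf}(\Z^2;\R)$ and its boundary is exactly $z_{\{i\}}$ (interior edges cancel, the strip boundaries telescope). So $[z_{\{i\}}]=0$ for every $i$, and no separating functionals can be built. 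You correctly observed that each $z_{\{i\}}$ is mean-invisible; the point is that mean-invisibility here is a symptom of triviality, not merely of subtlety. The place your sketch breaks down is precisely where you flagged it: ``restrict to a coset'' does not kill boundaries coming from all of $G$, and there is no $H$-transfer that repairs this.

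The paper's argument avoids this by working not with single cosets but with carefully chosen infinite unions of $H$-cosets. In the $\ell^\infty$ picture your classes are $[i(c)\otimes\chi_{Ht_i}]$; the paper instead constructs $H$-invariant functions $f_j=\chi_{T^j}\in\ell^\infty(G)^H$ where each $T^j$ is a \emph{left-thick} union of $H$-cosets (containing a translate of every finite set), together with left $G$-invariant means $m_k$ satisfying $m_k(f_j)=\delta_{kj}$. The cycles $i(c)\otimes f_j$ are then separated by the transfer maps $m_{k*}\colon H_n(G;\ell^\infty(G))\to H_n(G;\R)$, since $m_{k*}([i(c)\otimes f_j])=\delta_{kj}\,[i(c)]\neq 0$. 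The existence of such means on thick sets is a classical fact (Mitchell); the real work is a pigeonhole/separation lemma producing infinitely many pairwise disjoint thick sets, each a union of $H$-cosets. So the detecting functionals really are built from $G$-invariant means---but applied to functions with positive mean, not to density-zero single-coset indicators.
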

\setcounter{section}{4}
\setcounter{Satz}{3}

Therefore, for many amenable groups we can use regular group homology to deduce that uniformly finite homology is infinite dimensional.

We present some examples following directly from this approach. For instance, from the calculation of group homology of nilpotent groups we can deduce:
\begin{exa}[Nilpotent groups]
Let $G$ be a finitely generated virtually nilpotent group of Hirsch rank $h\in\N$. Then
\begin{align*} 
 H_k^{\uf}(G;\R) = 
 \begin{cases}
  \text{infinite dimensional} &\text{ if  $k \in \{0,\dots, h-1\}$}\\
  \R &\text{ if $k=h$}\\
 0 &\text{ else.}
 \end{cases}
\end{align*}
\end{exa}
\setcounter{section}{5}
\setcounter{Satz}{0}
Our main technique for proving Theorem~\ref{not-normal} is to use invariant means on $G$ and the corresponding transfer maps to distinguish between different homology classes. This is straightforward in the zero degree case and doing so we give a very simple proof that infinite amenable groups $G$ have infinite dimensional $H_0^{\uf}(G;\R)$. In order to extend this to the higher degree case, we construct functions in $\ell^{\infty}(G)$ that are invariant with respect to a subgroup $H\leq G$ and can be distinguished by means.\\ 

We also define a subspace $\widehat{H}_0^{\uf}(G;\R)\subset H_0^{\uf}(G;\R)$ of classes that can \emph{not} be detected by means and hence are different from the classes constructed before.  In the last part of this article, we give a geometric condition for classes in uniformly finite homology to be in $\widehat{H}_0^{\uf}(G;\R)$. We then present a method of differentiating between such classes, following Whyte. Hence we can show:
\begin{thm}
 Let $G$ be a finitely generated infinite amenable group. Then \[\dim_{\R} \widehat{H}_0^{\uf}(G;\R) =\infty.\] 
\end{thm}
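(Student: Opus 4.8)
The plan is to exhibit, inside $\widehat{H}_0^{\uf}(G;\R)$, an explicit infinite family of linearly independent classes. The starting observation is that every invariant mean $m$ on $G$ induces a well-defined functional on $H_0^{\uf}(G;\R)$. Indeed, a boundary $\partial c$ of a bounded, finite-propagation $1$-chain $c$ can be grouped by displacement: for each $g$ in the (finite) ball of radius $R$ it contributes a term $f_g\cdot g - f_g$ with $f_g \in \ell^\infty(G)$, so $\partial c$ is a finite sum of translation-differences and $m(\partial c)=0$ by invariance. By definition $\widehat{H}_0^{\uf}(G;\R)$ is the intersection of the kernels of all these functionals. The geometric condition I would isolate is density: if a chain $\alpha=\sum_x a_x\cdot x$ is supported on a set $A\subseteq G$ of zero upper Banach density, then $|m(\alpha)|\le \|\alpha\|_\infty\, m(\mathbf{1}_A)=0$ for every invariant mean $m$, since $m(\mathbf{1}_A)$ is bounded above by the upper Banach density of $A$. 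Thus any class represented by a density-zero chain lies in $\widehat{H}_0^{\uf}(G;\R)$, and the theorem reduces to exhibiting infinitely many such classes that are nonzero and linearly independent.

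The key step, following Whyte, is a lower bound for non-triviality via isoperimetry. If $\alpha=\partial c$ with $\|c\|_\infty\le M$ and propagation $\le R$, then for every finite $B\subseteq G$ the total mass $\sum_{x\in B}a_x$ equals the net flow of $c$ out of $B$, which is bounded by $M$ times the number of edges leaving $B$, hence by $C\cdot M\cdot|\partial_R B|$ where $\partial_R B$ is the inner $R$-boundary and $C=|B(e,R)|$. Consequently, if $\alpha\ge 0$ and $\sum_{x\in B_n}a_x/|\partial_R B_n|\to\infty$ along some sequence of finite sets $B_n$, then $[\alpha]\ne 0$. Because $G$ is amenable I would take $(B_n)$ to be a Følner sequence, for which $|\partial_R B_n|=o(|B_n|)$; this leaves room to build nonnegative chains whose mass on $B_n$ grows faster than $|\partial_R B_n|$ but remains of order $o(|B_n|)$, so that the support simultaneously has zero density (membership in $\widehat{H}_0^{\uf}$) and unbounded mass-to-boundary ratio (non-triviality).

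For the infinite family I would partition $\N$ into infinitely many infinite index sets $I_1,I_2,\dots$ and, using a fixed Følner sequence $(F_n)$, construct nonnegative chains $\alpha_k$ that place mass only at the scales $n\in I_k$: concretely $\alpha_k$ is supported on sparse subsets of $F_n\setminus F_{n-1}$ for $n\in I_k$, with total mass $\sim\omega_n|\partial_R F_n|$ where $\omega_n\to\infty$ and $\omega_n|\partial_R F_n|=o(|F_n|)$, placed far enough apart to keep the supports of zero density and mutually disjoint. To separate the classes I would test a combination $\sum_k\lambda_k[\alpha_k]$ at the scales in $I_k$: since the supports are disjoint and only $\alpha_k$ is active at scale $n\in I_k$, the mass of $\sum_k\lambda_k\alpha_k$ over $F_n$ is $\approx\lambda_k\,\omega_n|\partial_R F_n|$ along $n\in I_k$, up to the controlled contribution of finitely many other terms, so if $\lambda_k\ne 0$ the isoperimetric criterion forces $\sum_k\lambda_k[\alpha_k]\ne 0$. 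Hence the $[\alpha_k]$ are linearly independent and all lie in $\widehat{H}_0^{\uf}(G;\R)$.

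The main obstacle I expect is verifying the isoperimetric lower bound and the separation argument in the presence of signs. Turning ``net flow out of $B$ is bounded by boundary capacity'' into a clean statement requires fixing the thickened boundary $\partial_R B$ adapted to the propagation $R$ and checking the flow bookkeeping for arbitrary bounded finite-propagation $1$-chains, not merely edge-flows; and when testing a linear combination one must ensure that the positive and negative contributions coming from different $\lambda_k$ do not cancel at the probed scales. The scale separation via the $I_k$ together with the disjointness of supports is designed precisely to control this, but making the estimate ``only $\alpha_k$ contributes at scale $n\in I_k$'' quantitatively uniform — so that the finitely many coefficients $\lambda_1,\dots,\lambda_{k-1}$ cannot conspire to cancel the $\lambda_k$-term along all of $I_k$ — is the delicate point on which the argument rests.
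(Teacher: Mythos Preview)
Your foundations match the paper's: you correctly identify that means descend to functionals on $H_0^{\uf}(G;\R)$, that density-zero (the paper uses the slightly stronger ``sparse'' condition) forces mean-invisibility, and that Whyte's isoperimetric criterion is the tool for non-triviality. The construction via F\o lner sequences is also the right framework.

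The difference, and the gap, is in the separation mechanism. You partition scales into blocks $I_k$ and put $\alpha_k$ on the rings $F_n\setminus F_{n-1}$ for $n\in I_k$, then test a combination $\sum_j\lambda_j\alpha_j$ on $F_n$ with $n\in I_k$. But $F_n$ contains \emph{all} earlier rings, so every $\alpha_j$ contributes its accumulated mass $\sum_{m\in I_j,\,m\le n}\omega_m|\partial_R F_m|$, not just $\alpha_k$. For the argument to go through you need the mass on the ring at scale $n$ to dominate the total of all earlier rings across all indices; this is not automatic and requires, for instance, arranging $\omega_n|\partial_R F_n|$ to grow geometrically along a suitably thinned F\o lner sequence. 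You flag this as ``the delicate point,'' but without a concrete mechanism to enforce it the linear-independence step is not established.

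The paper sidesteps this entirely. Rather than partitioning scales, it builds a \emph{chain} of sparse sets $\Gamma_0,\Gamma_1,\dots$ present at every scale but with strictly increasing asymptotic densities $\sigma_S\prec\beta^S_{\Gamma_0}\prec\beta^S_{\Gamma_1}\prec\cdots$ along a single F\o lner sequence $S$ (the construction uses $r$-tilings to control $|\Gamma\cap S_j|/|S_j|$ from below). Separation is then obtained by linear functionals $\overline{\gamma^S_{c_i}}\colon H_0^{\uf}(G;\R)\to\R$ defined via $c'\mapsto\lim_n \bigl(\sum_{s\in S_n}c'(s)\bigr)/\bigl(\sum_{s\in S_n}c_i(s)\bigr)$, which satisfy $\gamma^S_{c_i}(c_j)=\delta_{ij}$ for $j\le i$: a triangular pairing that gives linear independence without any cancellation analysis. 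This is what buys the paper a clean finish where your scale-partition approach would still need a quantitative growth condition to rule out cross-term cancellation.
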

\setcounter{section}{1}
\bigskip

The article is structured in the following way: In Section~\ref{notation} we introduce  uniformly finite homology both following Block and Weinberger and in terms of group homology with $\ell^{\infty}$-coefficients and discuss its properties most relevant for this article. Section~\ref{main} contains our main results and proofs. In Section~\ref{examples} we present some examples and immediate applications of our main theorem. Finally, in Section~\ref{sparse} we introduce and study \emph{sparse classes} in $H_0^{\uf}(G;\R)$ and prove Theorem~\ref{t:sparse}.

\bigskip

\paragraph*{\bf{Acknowledgments}}
We thank Cristina Pagliantini for numerous fruitful discussions. We are grateful to Piotr Nowak for coming to Regensburg and suggesting the question. Many thanks to Cristina Pagliantini and Malte R\"oer for their support. Finally, we especially like to thank Clara L\"oh for many helpful suggestions and discussions. 

\section{Basic facts and notation}\label{notation}

In this section we fix some notation and present the main object we will investigate, namely uniformly finite homology. This was introduced by Block and Weinberger as a coarse homology invariant. The basic references for uniformly finite homology are \cite{BW, NY, W}. We will also define group homology with $\ell^{\infty}$-coefficients. For finitely generated groups there is a canonical isomorphism between the corresponding chain complexes (Remark~\ref{uf-infty}). Hence, to study uniformly finite homology we will often use the description in terms of $\ell^{\infty}$-coefficients.

\begin{def.}\label{d:uf}
Let $(X,d)$ be a metric space.
\begin{enumerate}
 \item   For each $n\in\N$ denote by $C_n^{\uf}(X;\R)$ the real vector space of functions $c\colon X^{n+1} \longrightarrow \R$ satisfying:
\begin{enumerate}
 \item The map $c$ is bounded. 
 \item For all $r\in\R_{>0}$  there exists a $K_r\in\R_{>0}$ (depending on $c$) such that for all $y\in X^{n+1}$ 
\[
 \bigl|\{x\in B_r(y)\;|\; c(x)\neq 0\}\bigr| \leq K_r. 
\]
Here we consider $X^{n+1}$ endowed with the metric
\[
d_n(x,y):=\max_{i\in\{0,\dots, n\}}d(x_i,y_i).
\]

\item There exists an $R_c\in \R_{>0}$ (depending on $c$) such that 
\[
\fa{x\in X^{n+1}} \sup_{i,j\in\{0,\dots,n\}} d(x_i,x_j)> R_c \Longrightarrow c(x) =0. 
\]
\end{enumerate}
We will write such a function also as a formal sum $\sum_{x\in X^{n+1}} c(x)\cdot x$.
\item Define for each $n\in\N_{>0}$ a boundary operator \[\partial_n\colon C_n^{\uf}(X;\R)\longrightarrow C_{n-1}^{\uf}(X;\R)\] by setting for each $x\in X$
\[
 \partial_n (x) = \sum_{j=0}^n (-1)^{j}(x_0,\dots,\widehat{x}_j,\dots,x_{n}). 
\]
and extending to $C_n^{\uf}(X;\R)$ in the obvious way. In this fashion, we get indeed a chain complex.
\item The homology of $(C_n^{\uf}(X;\R),\partial_n)_{n\in\N}$  is called the \emph{uniformly finite homology of $X$} and denoted by $H_\ast^{\uf}(X;\R)$. 
\end{enumerate}
\end{def.}

An important fact about uniformly finite homology is that it is a coarse invariant~\cite[Proposition 2.1]{BW}: 

\begin{prop}\label{t:Franziska}
 Let $X$ and $Y$ be metric spaces and $f\colon X\longrightarrow Y$ a quasi-isometry. Then $f$ induces a chain map
\begin{align*}
C_n^{\uf}(X;\R)&\longrightarrow C_n^{\uf}(Y;\R)\\
 \sum_{x\in X^{n+1}} c(x)\cdot x &\longmapsto \sum_{x\in X^{n+1}} c(x) \cdot (f(x_0), \dots, f(x_n)). 
\intertext{%
The induced map in homology 
}
H_\ast^{\uf}(X;\R)&\longrightarrow H_\ast^{\uf}(Y;\R)
\end{align*}
is an isomorphism in every degree. 
\end{prop}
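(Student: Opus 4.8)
The plan is to prove the two assertions separately: first that the coordinate-wise pushforward is a well-defined chain map, and then that the induced map is an isomorphism, by exhibiting a quasi-inverse together with a chain homotopy. Throughout I would fix quasi-isometry constants $L\geq 1$, $C\geq 0$ with $\frac1L d_X(x,x')-C\leq d_Y(f(x),f(x'))\leq L\,d_X(x,x')+C$ for all $x,x'$, and a coarse-density constant for $f(X)$ in $Y$.

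To see that the assignment $f_*\big(\sum_x c(x)\cdot x\big) := \sum_y \big(\sum_{x\colon f(x)=y} c(x)\big)\cdot y$ lands in $C_n^{\uf}(Y;\R)$, I would check the three conditions of Definition~\ref{d:uf} in turn. For boundedness, note that $f(x)=f(x')$ forces $d_X(x,x')\leq LC$ coordinatewise by the lower bound, so each fibre contains at most $K_{LC}$ nonzero points of $c$; hence every coefficient of $f_*c$ is a bounded sum of bounded numbers. For the local finiteness condition, the lower bound shows that the $f$-preimage of any $d_n$-ball of radius $r$ in $Y^{n+1}$ lies in a $d_n$-ball of radius roughly $L(r+C)$ in $X^{n+1}$, so the corresponding condition for $c$ bounds the number of nonzero points of $f_*c$ there. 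For the propagation condition, the upper bound gives $\sup_{i,j}d_Y(f(x_i),f(x_j))\leq LR_c+C$ whenever $\sup_{i,j}d_X(x_i,x_j)\leq R_c$, so $R_{f_*c}:=LR_c+C$ works. That $f_*$ is a chain map is then routine: both $\partial$ and $f_*$ act by operations on coordinates that manifestly commute, so $\partial_n f_*=f_*\partial_n$.

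For the isomorphism, the key lemma is that close maps induce chain-homotopic pushforwards. Call quasi-isometries $g,g'\colon X\to Y$ close if $\sup_{x\in X}d_Y(g(x),g'(x))=:S<\infty$. I would define the prism operator $h\colon C_n^{\uf}(X;\R)\to C_{n+1}^{\uf}(Y;\R)$ on a tuple by
\[
 h(x_0,\dots,x_n)=\sum_{j=0}^n(-1)^j\,(g(x_0),\dots,g(x_j),g'(x_j),\dots,g'(x_n)),
\]
the standard simplicial prism. As above one checks $h$ preserves uniform finiteness: boundedness and local finiteness follow from those of $g$ and $g'$ as quasi-isometries, while the propagation condition crucially uses closeness, since the diameter of each tuple appearing in $h$ is bounded by $LR_c+C+S$. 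A direct computation gives $\partial_{n+1}h+h\,\partial_n=g'_*-g_*$, so $g_*$ and $g'_*$ agree on homology.

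Finally I would choose a quasi-inverse $\bar f\colon Y\to X$, which is again a quasi-isometry, so that $\bar f\circ f$ is close to $\id_X$ and $f\circ\bar f$ is close to $\id_Y$. Since the pushforward is functorial, $(\bar f\circ f)_*=\bar f_*\circ f_*$, and $(\id_X)_*$ is the identity chain map, the lemma yields $\bar f_*\circ f_*\simeq\id$ and $f_*\circ\bar f_*\simeq\id$ on homology in every degree; hence $f_*$ is an isomorphism. The main obstacle is the one technical point underlying everything: verifying that both the pushforward and, especially, the prism operator $h$ genuinely land in uniformly finite chains. This is the only place where the full strength of the quasi-isometry hypotheses and of closeness is used — the lower bound controls fibres and preimages of balls, while the upper bound together with closeness controls diameters.
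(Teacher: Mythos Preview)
The paper does not give its own proof of this proposition; it simply attributes the result to Block and Weinberger~\cite[Proposition 2.1]{BW} and states it without argument. So there is nothing in the paper to compare your proof against directly.

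That said, your argument is correct and is exactly the standard one found in the literature (including~\cite{BW} and~\cite{NY}): verify that the coordinatewise pushforward respects the three uniform-finiteness conditions and commutes with $\partial$, build the prism chain homotopy to show that close maps induce the same map on homology, and then apply this to $\bar f\circ f$ and $f\circ\bar f$ versus the identities. Your identification of where each quasi-isometry inequality enters is accurate: the lower bound controls fibre sizes and preimages of balls (needed for conditions (a) and (b)), the upper bound controls propagation (condition (c)), and closeness is what keeps the prism simplices of bounded diameter. One small point worth making explicit in a write-up is that boundedness and local finiteness for $h(c)$ require the same fibre-counting argument as for $f_*c$, applied separately for each summand index~$j$ and then summed over $j\in\{0,\dots,n\}$; you allude to this but it is the step most likely to be waved through too quickly.
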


In particular, we can define the uniformly finite homology of a group $G$ with a fixed set of generators $S$, by endowing $G$ with the word metric with respect to~$S$. We recall the definition:

\begin{def.}\label{d:wordmetric}
Let $G$ be a group with generating set $S$. The \emph{word metric on $G$ with respect to $S$} is the metric defined as
\[
d_{S}(g,h):=\min\bigl\{n\in\N \; \bigm | \; \exists_{s_{1},\dots,s_{n}\in S\cup S^{-1}}\; g^{-1}\cdot h = s_{1} \cdots s_{n} \bigr\}
\]
for any $g,h\in G$.
\end{def.}

By Proposition~\ref{t:Franziska} the uniformly finite homology of a finitely generated group does not depend on the finite generating set up to canonical isomorphism. Hence, this defines a quasi-isometry invariant for finitely generated groups. Notice that in this case condition (i)-(b) in Definition~\ref{d:uf} can be dropped.

Most important for us and many applications is the following result of Block and Weinberger~\cite[Theorem 3.1]{BW}:

\begin{thm}
Let $G$ be a finitely generated group. Then $G$ is amenable if and only if $H_0^{\uf}(G;\R)\neq 0$.
\end{thm}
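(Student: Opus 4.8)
The plan is to work with the canonical identification of the degree-zero chains with bounded functions and to read off $H_0^{\uf}$ as a cokernel. Since $G$ is finitely generated, every ball $B_r(g)$ in the word metric has cardinality $|B_r(e)|$ independent of $g$, so condition (i)-(b) of Definition~\ref{d:uf} is automatic and $C_0^{\uf}(G;\R)=\linfg$; as there is no boundary leaving degree zero, $H_0^{\uf}(G;\R)=\linfg/\im\partial_1$. I would first compute $\im\partial_1$ explicitly: writing a $1$-chain $c$ via the functions $c_s(x):=c(x,xs)$, which are nonzero only for finitely many $s$ by condition (iii), a short rearrangement gives $\partial_1 c=\sum_{s}(\rho_s c_s-c_s)$, where $\rho_s f(x):=f(xs^{-1})$ denotes right translation. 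Hence $\im\partial_1$ is exactly the linear span of all differences $\rho_s f-f$ with $f\in\linfg$ and $s\in G$. The distinguished class is the \emph{fundamental class} $[G]=[\mathbf 1]$ of the constant function $\mathbf 1\equiv 1$.

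For the implication ``amenable $\Rightarrow H_0^{\uf}\neq 0$'' I would use an invariant mean directly. A translation-invariant mean is a functional $m\in(\linfg)^*$ with $m(\mathbf 1)=1$ satisfying $m(\rho_s f)=m(f)$; by the previous paragraph it annihilates every generator $\rho_s f-f$ of $\im\partial_1$ and therefore descends to a functional $\overline m\colon H_0^{\uf}(G;\R)\to\R$. Since $\overline m([G])=m(\mathbf 1)=1\neq 0$, the fundamental class is nonzero and $H_0^{\uf}(G;\R)\neq 0$.

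For the converse I would prove the contrapositive: if $G$ is non-amenable then $\partial_1$ is surjective, i.e. every $f\in\linfg$ is a boundary. The point is that non-amenability of the Cayley graph is equivalent to a strong isoperimetric (expansion) inequality $|\partial F|\geq\varepsilon|F|$ for all finite $F\subseteq G$. Given $f$, one transports the ``mass distribution'' $f$ to infinity by a flow supported on pairs at bounded distance and with uniformly bounded coefficients; the divergence of such a flow is precisely an element of $\im\partial_1$, and realizing a prescribed divergence amounts to a flow/transport problem whose feasibility is guaranteed by the expansion inequality via a max-flow/min-cut (equivalently, an infinite Hall's marriage) argument. Producing this flow so that it lies in $C_1^{\uf}(G;\R)$, that is, with bounded displacement \emph{and} bounded capacity, is the main obstacle: a soft Hahn--Banach/duality argument would only place $\mathbf 1$ in the weak-$*$ closure of $\im\partial_1$, whereas homological vanishing requires honest surjectivity. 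The expansion inequality is exactly what upgrades approximate transport to an actual bounded flow, and once every $f$ is realized as a boundary we conclude $H_0^{\uf}(G;\R)=0$, completing the equivalence.
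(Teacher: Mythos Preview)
The paper does not give its own proof of this theorem: it is quoted twice (in the introduction and in Section~\ref{notation}) as a result of Block and Weinberger~\cite[Theorem~3.1]{BW}, with no argument supplied. So there is nothing in the paper to compare your attempt against directly.

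That said, your outline is the standard Block--Weinberger argument and is essentially correct. Your identification $C_0^{\uf}(G;\R)\cong\linfg$ and the computation $\im\partial_1=\mathrm{span}\{\rho_s f-f\}$ are right, and the ``amenable $\Rightarrow$ nonzero'' direction is complete. One small point: since your $\rho_s$ is a \emph{right} translation, the mean you need is a right-invariant one; this is harmless (amenability gives right-invariant and indeed bi-invariant means), and in fact Remark~3.3 of the paper observes precisely that the degree-zero transfer $\overline m_0$ is the right-invariant mean associated to $m$. For the converse you have correctly identified both the mechanism (strong isoperimetry $\Rightarrow$ bounded flow with prescribed divergence via Hall/max-flow--min-cut) and the pitfall (Hahn--Banach alone only gives the weak-$*$ closure of $\im\partial_1$, not surjectivity). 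What remains is to actually carry out that flow construction; as written, this direction is a plan rather than a proof. The paper's later Theorem~\ref{t:whyte} (Whyte's criterion) is exactly the quantitative form of this flow argument, and specialising it to $c=\mathbf 1$ together with the failure of the F\o lner condition would finish your converse cleanly.
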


We define now group homology with $\ell^{\infty}$-coefficients. Let $G$ be a (discrete) group. Consider the $\R[G]$-chain complex $(C_*(G;\R),\partial_*)$ defined as follows:
\begin{enumerate}
\item For any $n\in\N$, set $C_n(G;\R):=\bigoplus_{(g_0,\dots,g_n)\in G^{n+1}}\R\cdot(g_0,\dots,g_n)$ with the $G$-action given by $g\cdot (g_0,\dots, g_n)=(gg_0,\dots,gg_n)$.
\item For any $n\in \N_{>0}$, let $\partial_n$ be the boundary map
\begin{align*}
\partial_{n}\colon C_n(G;\R) &\longrightarrow C_{n-1}(G;\R)\\
(g_0,\dots,g_n) &\longmapsto \sum_{j=0}^{n}(-1)^{j}(g_0,\dots,\hat{g}_j,\dots,g_n).
\end{align*}
\end{enumerate}
Consider $\ell^{\infty}(G)$ the space of real-valued bounded functions on $G$; this is a left $\R[G]$-module with respect to the action
\begin{align*}
G\times\ell^{\infty}(G) &\longrightarrow \ell^{\infty}(G)\\
(g,\varphi) &\longmapsto g\cdot\varphi =\bigl( g'\longmapsto \varphi(g^{-1}g')\bigr).
\end{align*}
Let $C_*(G;\ell^{\infty}(G))$ be the $\R$-chain complex given by
\[
C_*(G;\ell^{\infty}(G)):=\overline{C}_*(G;\R)\otimes_{\R[G]}\ell^{\infty}(G). 
\]
where $\overline{C}_*(G;\R)$ denotes the right $\R[G]$-module obtained by $C_*(G;\R)$ via the canonical involution $g\mapsto g^{-1}$.
\begin{def.}
For a group $G$ we define the \emph{group homology of $G$ with coefficients in $\ell^{\infty}(G)$} by $H_*(G;\ell^{\infty}(G)):=H_*(C_*(G;\ell^\infty(G)))$.
\end{def.}

The following Remark was observed by Brodzki, Niblo and Wright~\cite{BNW}: 

\begin{rem}\label{uf-infty}
For a finitely generated group $G$, the uniformly finite chain complex $(C^{\uf}_*(G;\R),\partial_*)$ is canonically chain isomorphic to $(C_*(G;\ell^{\infty}(G)),\partial_*)$. 

Indeed, for any $n\in\N$, the simplices in a fixed uniformly finite $n$-chain $c$ are tuples $(g_0,\dots,g_n)\in G^{n+1}$ having diameter less than a uniform constant $R_c \in \R_{>0}$. These simplices are contained in the $G$-orbits of the finitely many simplices of the form $(e,t_1,\dots,t_{n})$ of diameter less than $R_c$. Hence, the following chain isomorphism is well-defined:
\begin{align*}
\rho_n\colon  C_n^{\uf}(G;\R) &\longrightarrow C_n(G; \ell^{\infty}(G))\\
 \sum_{(g_0,\dots,g_n)\in G^{n+1}} c{(g_0,\dots g_n)}\cdot (g_0,\dots, g_n) &\longmapsto \sum_{(t_1,\dots,t_n)\in G^{n}} (e,t_1,\dots, t_n)\otimes \varphi_{(t_1,\dots, t_n)}
\end{align*}
Here for each $(t_1,\dots,t_n)\in G^n$ the map $ \varphi_{(t_1,\dots,t_n)} \in \ell^{\infty}(G)$ is given by
\begin{align*}
\varphi_{(t_1,\dots,t_n)} (g) =  c(g^{-1},g^{-1}\cdot t_1,\dots,g^{-1}\cdot t_n).
\end{align*} 
\end{rem}
In particular, $H_*^{\uf}(G;\R)\cong H_*(G;\ell^\infty(G))$. Therefore, certain aspects of uniformly finite homology are accessible through methods of ordinary group homology.

We recall the definition of amenable groups that will be the main class of groups we will study in this article.

\begin{def.}
\label{definitionmean}
Let $G$ be a group. A \emph{mean} is a linear map $m: \ell^{\infty}(G)\longrightarrow\R$ satisfying the following properties:
\begin{enumerate}
\item We have $m(\chi_G)=1$ for the characteristic function $\chi_{G}\in\ell^{\infty}(G)$ of the group.
\item For any $\varphi\in\ell^{\infty}(G)$ such that $\varphi\geq 0$ (i.e., $\varphi(g)\geq 0$ for all $g\in G$) we have $m(\varphi)\geq 0$.
\end{enumerate}
A mean $m$ is said to be \emph{left $G$-invariant} if
\begin{enumerate}
\setcounter{enumi}{2}
\item For any $g\in G$ and any $\varphi\in\ell^{\infty}(G)$, we have  $m(g\cdot\varphi)=m(\varphi)$. 
\end{enumerate}
\end{def.}

Similarly one could define right $G$-invariant means.

\begin{def.}
A group $G$ is \emph{amenable} if it admits a left $G$-invariant mean.
\end{def.}

\begin{rem}The class of amenable groups contains for example all solvable, all finite and all subexponential groups and is closed under taking extensions, subgroups, quotients, etc. \cite{CC, P}. 
\end{rem}

For an amenable group $G$, let $M(G)$ be the set of left $G$-invariant means and let $LM(G)$ be its linear span in $\Hom_{\R}(\ell^\infty(G),\R)$. 
We have the following result due to Chou~\cite{C}:

\begin{thm}\label{t:Chou}
If $G$ is an infinite amenable group, then $G$ has exactly $2^{{2}^{\left|G\right|}}$ left invariant means, where $\left|G\right|$ denotes the cardinality of $G$. Thus $LM(G)$ is infinite dimensional.
\end{thm}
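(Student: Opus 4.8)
The plan is to establish the exact count by proving the two matching bounds $|M(G)| \le 2^{2^{|G|}}$ and $|M(G)| \ge 2^{2^{|G|}}$, and then to read off the infinite-dimensionality of $LM(G)$ as a formal consequence. The upper bound is a soft cardinality count: for infinite $G$ one has $|\ell^{\infty}(G)| = \mathfrak{c}^{|G|} = 2^{|G|}$, and since every mean is in particular a function $\ell^{\infty}(G)\to\R$, there are at most $\mathfrak{c}^{2^{|G|}} = 2^{2^{|G|}}$ of them; the same bound holds for the linear span, so $|LM(G)|\le 2^{2^{|G|}}$. All the work is in the lower bound, where I must exhibit $2^{2^{|G|}}$ genuinely distinct invariant means.

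For the lower bound I would focus on the case relevant to this article, namely $G$ countable (every finitely generated group is), so that the target is $2^{2^{\aleph_0}}$ and I can index everything by $\N$. First I would use amenability to fix a F\o lner sequence $(F_n)_{n\in\N}$ and then arrange the $F_n$ to be pairwise disjoint: right translation $F\mapsto Fg$ preserves both $|F|$ and the left-F\o lner property, and since each $F_n$ is finite while $G$ is infinite, one can inductively replace $F_n$ by a translate avoiding the finite union of the sets already chosen. For an ultrafilter $p$ on $\N$ I then set
\[
m_p(\varphi):=\lim_{n\to p}\frac{1}{|F_n|}\sum_{g\in F_n}\varphi(g).
\]
The ultralimit exists and is bounded, $m_p$ is plainly linear, positive and satisfies $m_p(\chi_G)=1$, and left-invariance follows from the F\o lner condition, since $|s^{-1}F_n\triangle F_n|/|F_n|\to 0$ forces $m_p(s\cdot\varphi)=m_p(\varphi)$ for every $s\in G$. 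Hence $m_p\in M(G)$.

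The heart of the argument is that $p\mapsto m_p$ is injective. For $A\subseteq\N$ consider the test function $\varphi_A:=\chi_{\bigcup_{n\in A}F_n}\in\ell^{\infty}(G)$; by disjointness the F\o lner average of $\varphi_A$ over $F_n$ equals $1$ when $n\in A$ and $0$ otherwise, so $m_p(\varphi_A)=1$ if $A\in p$ and $m_p(\varphi_A)=0$ if $A\notin p$. If $p\neq q$ there is a set $A$ lying in $p$ but not in $q$, and then $m_p(\varphi_A)=1\neq 0=m_q(\varphi_A)$, so $m_p\neq m_q$. As there are $|\beta\N|=2^{2^{\aleph_0}}$ ultrafilters on $\N$, this produces $2^{2^{\aleph_0}}$ distinct invariant means. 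Finally, since $M(G)\subseteq LM(G)$ and any finite-dimensional real vector space has cardinality only $\mathfrak{c}=2^{\aleph_0}<2^{2^{\aleph_0}}$, the space $LM(G)$ cannot be finite-dimensional.

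The hard part is the separation step: the whole construction collapses unless the averaging regions are made independent, which is precisely what pairwise disjointness of the $F_n$ delivers — it turns the ultrafilter $p$ into data that a mean can read off via the indicator functions $\varphi_A$. The other point requiring genuine extra work is the general uncountable case $|G|=\kappa$: there one must replace $\N$ by an index set of size $\kappa$ and build a $\kappa$-indexed pairwise disjoint family of F\o lner sets together with the corresponding space of ultrafilters, a transfinite elaboration of the countable argument above.
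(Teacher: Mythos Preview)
The paper does not prove this theorem; it is quoted as a result of Chou with a citation to \cite{C}, so there is no ``paper's own proof'' to compare against. Your task was therefore to supply an argument the paper omits.

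Your proposal is essentially correct for countable $G$, which is the only case the paper ever uses (all applications are to finitely generated groups). The upper bound is fine. For the lower bound, the key points---that right translates of a left F\o lner sequence remain left F\o lner, that one can inductively disjointify the $F_n$ because $G$ is infinite, that ultralimits of F\o lner averages give invariant means, and that disjointness makes the map $p\mapsto m_p$ injective via the test functions $\chi_{\bigcup_{n\in A}F_n}$---are all sound, and the cardinality of $\beta\N$ then yields $2^{2^{\aleph_0}}$ means. The deduction that $LM(G)$ is infinite dimensional is immediate from the cardinality gap, as you say.

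You are right to flag the uncountable case as genuinely harder: a F\o lner \emph{sequence} need not exist, and even with a F\o lner net one must produce a $|G|$-sized disjoint family of F\o lner sets and control the ultrafilter count on that index set. Chou's original argument handles this, but it is not a routine extension of what you wrote. Since the paper only needs the countable (indeed finitely generated) case, your sketch suffices for its purposes; if you intend to claim the theorem in the generality stated, you should either cite Chou or outline the transfinite construction explicitly.
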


\section{Main results}\label{main}

In this section, we compute uniformly finite homology of finitely generated (discrete) amenable groups in many cases:  in Theorem~\ref{zerocase} we show that in degree zero this is always infinite dimensional, while in Theorem~\ref{not-normal} we calculate higher degree uniformly finite homology.

\subsection{Transfer via means}
Let $G$ be an amenable group. Every left $G$-invariant mean $m\in M(G)$ induces a transfer map $m_* :C_*(G;\ell^{\infty}(G))\longrightarrow C_*(G;\R)$, which averages the coefficients. Our next proposition is similar to a result of Attie \cite[Proposition 2.15]{AT}.

\begin{prop} \label{thm: mean}
Let $G$ be an amenable group and let $\chi_{G}\in \ell^{\infty}(G)$ be the characteristic function of $G$. Then every mean $m\in M(G)$ induces a transfer map 
\begin{align*}
m_\ast\colon  H_\ast(G;\ell^{\infty}(G))&\longrightarrow H_\ast(G;\R),\\
 [c\otimes \varphi] &\longmapsto [m(\varphi)\cdot c]
\intertext{which is a left inverse to the map}
i_\ast\colon  H_\ast (G;\R)&\longrightarrow H_\ast(G;\ell^{\infty}(G))\\
[c]&\longmapsto [c\otimes\chi_G]
\end{align*}
induced by the canonical inclusion $\R\longhookrightarrow \ell^{\infty}(G)$ as constant functions.
\end{prop}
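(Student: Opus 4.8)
The plan is to recognize the whole statement as an instance of functoriality of the functor $\overline{C}_*(G;\R)\otimes_{\R[G]}(-)$, so that essentially no cycle-by-cycle bookkeeping is needed. The key observation is that a left $G$-invariant mean is nothing but a homomorphism of left $\R[G]$-modules $m\colon\ell^{\infty}(G)\longrightarrow\R$, where $\R$ carries the trivial $G$-action: $\R$-linearity is part of Definition~\ref{definitionmean}, and property (iii) says precisely that $m(g\cdot\varphi)=m(\varphi)=g\cdot m(\varphi)$ for all $g\in G$, i.e. that $m$ is $G$-equivariant. Note that for this we use neither the normalization (i) nor the positivity (ii); these will enter only in the last step. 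In the same way, the inclusion $\iota\colon\R\longhookrightarrow\ell^{\infty}(G)$, $r\longmapsto r\cdot\chi_G$, is a homomorphism of left $\R[G]$-modules, because $\chi_G$ is $G$-invariant and hence $g\cdot(r\chi_G)=r\chi_G$.

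First I would apply the functor $\overline{C}_*(G;\R)\otimes_{\R[G]}(-)$, which sends homomorphisms of left $\R[G]$-modules to chain maps of $\R$-chain complexes, to the map $m$. This produces a chain map, namely the map $m_*=\overline{C}_*(G;\R)\otimes_{\R[G]}m$,
\[
m_*\colon C_*(G;\ell^{\infty}(G))\longrightarrow \overline{C}_*(G;\R)\otimes_{\R[G]}\R,
\]
and by definition the target is exactly the chain complex computing the group homology $H_*(G;\R)$ with trivial coefficients. On a generator this chain map reads $(g_0,\dots,g_n)\otimes\varphi\longmapsto(g_0,\dots,g_n)\otimes m(\varphi)=m(\varphi)\cdot(g_0,\dots,g_n)$, which is the asserted formula; in particular well-definedness on the tensor product (the one point that might otherwise require a hand computation) is automatic, since it is just the statement that $m$ is $G$-equivariant. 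Passing to homology yields the transfer $m_*\colon H_*(G;\ell^{\infty}(G))\longrightarrow H_*(G;\R)$, $[c\otimes\varphi]\longmapsto[m(\varphi)\cdot c]$. Applying the same functor to $\iota$ gives the map $i_*$ of the statement.

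It remains to check that $m_*$ is a left inverse of $i_*$, and here I would again argue functorially rather than chase representatives. The composite $m\circ\iota\colon\R\longrightarrow\R$ sends $r$ to $m(r\chi_G)=r\cdot m(\chi_G)=r$, where the last equality is the normalization $m(\chi_G)=1$; hence $m\circ\iota=\id_{\R}$. Since $\overline{C}_*(G;\R)\otimes_{\R[G]}(-)$ and $H_*$ are functors, they preserve composition and identities, so
\[
m_*\circ i_*=\bigl(\overline{C}_*(G;\R)\otimes_{\R[G]}(m\circ\iota)\bigr)_*=\bigl(\overline{C}_*(G;\R)\otimes_{\R[G]}\id_{\R}\bigr)_*=\id_{H_*(G;\R)}.
\]

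The genuinely substantive point is the very first one, namely seeing the invariant mean as a morphism of $\R[G]$-modules into the trivial module; once that is in place, the chain-map property, well-definedness on homology, and the left-inverse identity are all formal consequences of functoriality, and the three axioms of a mean get used exactly where one expects (linearity and invariance for the module map, normalization for the left inverse). If one prefers to avoid the categorical language, the same steps can be carried out by hand: well-definedness of $m_*$ follows from $m(g\cdot\varphi)=m(\varphi)$ together with the fact that passing to $G$-coinvariants identifies $(g_0,\dots,g_n)$ with $(g^{-1}g_0,\dots,g^{-1}g_n)$, and the chain-map property is immediate because $\partial$ acts only on the simplicial part while $m(\varphi)$ is a scalar.
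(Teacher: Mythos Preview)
Your proof is correct and follows essentially the same approach as the paper: both recognize that a left $G$-invariant mean is a $G$-equivariant map $\ell^\infty(G)\to\R$ (with $\R$ the trivial module), obtain $m_*$ and $i_*$ as the induced change-of-coefficients chain maps, and deduce $m_*\circ i_*=\id$ from $m(\chi_G)=1$. Your write-up is simply more explicit about the functoriality of $\overline{C}_*(G;\R)\otimes_{\R[G]}(-)$, whereas the paper states the same content more tersely.
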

\begin{proof}
Consider $\R$ as $G$-module with the trivial action; then every $m\in M(G)$ is a $G$-equivariant map $m\colon \ell^{\infty}(G)\longrightarrow\R$ and thus induces a change-of-coefficients chain map 
\begin{align*}
 m_\ast\colon  C_\ast(G;\ell^{\infty}(G))&\longrightarrow C_\ast(G;\R)\\
 c\otimes \varphi &\longmapsto m(\varphi)\cdot c.
\end{align*}
The inclusion $i\colon\R\longrightarrow\ell^\infty(G)$ is also $G$-equivariant and induces a chain map
\begin{align*}
 i_\ast\colon C_\ast(G;\R) &\longrightarrow C_\ast(G;\ell^{\infty}(G))\\
 c&\longmapsto c\otimes\chi_G.
\end{align*}
Obviously, for every mean $m\in M(G)$ the map $m_\ast$ is a left inverse for $i_\ast$. Applying homology proves the claim.\qedhere
\end{proof} 

\begin{rem}\label{t:Mattgias}
Under the canonical identification between uniformly finite homology and homology with $\ell^\infty$-coefficients for finitely generated groups, the maps $m_\ast$ defined in Proposition~\ref{thm: mean} translate (by precomposition with $\rho_\ast$) to transfer maps for uniformly finite homology that we denote by $\overline{m}_\ast$.
\end{rem}
Hence, for any finitely generated amenable group there is an inclusion 
\[
H_\ast(G;\R)\longhookrightarrow H_\ast^{\uf}(G;\R).
\]

\begin{rem}
Given $m\in M(G)$, in degree zero we have $C_0(G;\ell^{\infty}(G))\cong\ell^{\infty}(G)$ and the transfer map $m_0$ coincides with the mean $m$.  Also $C_0^{\uf}(G;\R)\cong\ell^{\infty}(G)$ and $\rho_0$ is just the canonical inversion
\begin{align*}
\ell^{\infty}(G) &\longrightarrow \ell^{\infty}(G)\\
\varphi &\longmapsto (g\longmapsto \varphi(g^{-1})).
\end{align*}
Therefore the transfer map $\overline{m}_{0}$ on $C_0^{\uf}(G;\R)$ coincides with the corresponding right-invariant mean.
\end{rem}

\begin{def.}
The \emph{uniformly finite homological dimension} of a group $G$ is defined by
\[
\hd_{\uf} G = \sup\{n\in\N \; | \; H_n^{\uf}(G;\R)\neq 0 \}\in\N\cup\{\infty\}.
\]
\end{def.}

Because uniformly finite homology is invariant under quasi-isometry, we obtain the following fact as an immediate corollary of Proposition \ref{thm: mean}:

\begin{cor}
 The Hirsch rank of a finitely generated virtually nilpotent group $G$ equals $\hd_{\uf}(G)$ and is therefore a quasi-isometry invariant.
\end{cor}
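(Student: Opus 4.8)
The plan is to separate the statement into the numerical equality $\hd_{\uf}(G)=h$, where $h$ denotes the Hirsch rank of $G$, and the ``therefore'' clause. The latter is immediate: uniformly finite homology is a quasi-isometry invariant by Proposition~\ref{t:Franziska}, so $\hd_{\uf}$ is a quasi-isometry invariant of finitely generated groups, and once $\hd_{\uf}(G)=h$ is established the Hirsch rank inherits this invariance. To compute $\hd_{\uf}(G)$ I would first reduce to the torsion-free case. A finitely generated virtually nilpotent group $G$ contains a torsion-free nilpotent subgroup $N$ of finite index, and $h$ is unchanged on passing to $N$ since the Hirsch rank is a commensurability invariant. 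As $N$ has finite index, the inclusion $N\longhookrightarrow G$ is a quasi-isometry, so by Proposition~\ref{t:Franziska} it suffices to prove $\hd_{\uf}(N)=h$.

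For the lower bound I would use that a finitely generated torsion-free nilpotent group $N$ of Hirsch rank $h$ is the fundamental group of a closed aspherical \emph{orientable} nilmanifold of dimension $h$ (Mal'cev), hence an orientable Poincar\'e duality group of dimension $h$; in particular $H_h(N;\R)\cong\R\neq 0$. Since $N$ is amenable, Proposition~\ref{thm: mean} shows that the lifting map $i_\ast\colon H_h(N;\R)\longrightarrow H_h(N;\ell^{\infty}(N))\cong H_h^{\uf}(N;\R)$ is injective, so $H_h^{\uf}(N;\R)\neq 0$ and $\hd_{\uf}(N)\geq h$.

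For the upper bound I would invoke that $N$, being a Poincar\'e duality group of dimension $h$, has cohomological dimension $h$; equivalently, $\R$ admits a projective $\R[N]$-resolution of length $h$. Tensoring this resolution with an arbitrary coefficient module shows $H_n(N;M)=0$ for every $\R[N]$-module $M$ and every $n>h$. Applying this to $M=\ell^{\infty}(N)$ and using the isomorphism $H_n^{\uf}(N;\R)\cong H_n(N;\ell^{\infty}(N))$ of Remark~\ref{uf-infty} gives $H_n^{\uf}(N;\R)=0$ for $n>h$, whence $\hd_{\uf}(N)\leq h$. Combining the two bounds yields $\hd_{\uf}(N)=h$, and transporting along the quasi-isometry $N\longhookrightarrow G$ gives $\hd_{\uf}(G)=h$.

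The step I expect to be the main obstacle is the lower bound, and it is precisely the reason the reduction to $N$ is not a mere convenience: for a general virtually nilpotent $G$ the top rational homology $H_h(G;\R)$ may vanish (for instance the infinite dihedral group $G=\Z\rtimes\Z/2$ has $h=1$ but $H_1(G;\R)=0$, as the $\Z/2$-action reverses orientation), so the injection $i_\ast$ detects nothing on $G$ itself. Passing through the quasi-isometry to the orientable torsion-free subgroup $N$, where $H_h(N;\R)\neq 0$ is forced by Poincar\'e duality, is exactly what circumvents this and is the crux of the argument; everything else is either standard homological algebra or a direct appeal to Propositions~\ref{t:Franziska} and~\ref{thm: mean}.
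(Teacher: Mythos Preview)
Your proposal is correct and follows essentially the same route as the paper: reduce to a finite-index nilpotent subgroup via quasi-isometry invariance, bound $\hd_{\uf}$ above by the homological dimension (using Remark~\ref{uf-infty}), and bound it below via the injection of Proposition~\ref{thm: mean} together with $H_h\neq 0$. The only cosmetic difference is that the paper cites Stammbach for the equalities $\hd_\R=h$ and $H_h(\,\cdot\,;\R)\neq 0$ for finitely generated nilpotent groups, whereas you pass one step further to a torsion-free subgroup and invoke Mal'cev/Poincar\'e duality to obtain the same facts; your dihedral example nicely explains why some such reduction is genuinely needed for the lower bound.
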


\begin{proof}
We can assume $G$ to be nilpotent; indeed, by definition, the Hirsch rank of a virtually nilpotent group is the Hirsch rank of any nilpotent subgroup $G'\leq G$ of finite index, and since $G$ is finitely generated it is quasi-isometric to $G'$. Clearly $\hd_{\uf} G\leq \hd_\R G$, where $\hd_\R$ denotes the homological dimension of $G$.

 Conversely, for a finitely generated nilpotent group the homological dimension coincides with the largest integer $n$ for which $H_n(G;\R)\neq 0$ and with its Hirsch rank~\cite{ST}. In view of the inclusion  $H_\ast(G;\R)\longhookrightarrow H_\ast^{\uf}(G;\R)$ obtained in Proposition \ref{thm: mean}, this integer must be smaller or equal than $\hd_{\uf} G$.
\end{proof}

\subsection{Degree zero}
\begin{def.}
 We call the subspace
\begin{align*}
 \widehat{H}_0^{\uf}(G;\R):= \{c\in H_0^{\uf}(G;\R) \mid \fa{m\in M(G)} \overline{m}_0(c) = 0\}
\end{align*}
the \emph{mean-invisible part of} $H_0^{\uf}(G;\R)$.
\end{def.}

\begin{thm}\label{zerocase}
 Let $G$ be a finitely generated infinite amenable group. Then \[ \dim_\R H_0^{\uf}(G;\R)/\widehat{H}_0^{\uf}(G;\R) = \infty.\]

In particular, $\dim_\R H_0^{\uf}(G;\R) = \infty$.
\end{thm}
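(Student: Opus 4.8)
The plan is to set up a duality between the quotient $H_0^{\uf}(G;\R)/\widehat{H}_0^{\uf}(G;\R)$ and the space $LM(G)$ of invariant means, and then feed in Chou's theorem (Theorem~\ref{t:Chou}) to conclude. Concretely, I would consider the bilinear pairing
\[
LM(G)\times H_0^{\uf}(G;\R)\longrightarrow\R,\qquad (m,c)\longmapsto\overline{m}_0(c),
\]
where $\overline{m}_0$ denotes the degree-zero transfer map of Proposition~\ref{thm: mean} (using $H_0(G;\R)\cong\R$), extended $\R$-linearly from $M(G)$ to its span $LM(G)$. By definition of the mean-invisible part, a class lies in $\widehat{H}_0^{\uf}(G;\R)$ precisely when it is annihilated by $\overline{m}_0$ for all $m\in M(G)$, equivalently --- by linearity --- for all $m\in LM(G)$. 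Thus every functional $\overline{m}_0$, $m\in LM(G)$, vanishes on $\widehat{H}_0^{\uf}(G;\R)$ and hence factors through the quotient $V:=H_0^{\uf}(G;\R)/\widehat{H}_0^{\uf}(G;\R)$. This yields a linear map $\Phi\colon LM(G)\longrightarrow V^{\ast}$.

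The key step is to prove that $\Phi$ is injective; this is non-degeneracy of the pairing in the mean variable. For this I would descend to the chain level. Under the identification $C_0^{\uf}(G;\R)\cong\linfg$ of Remark~\ref{uf-infty}, the boundary $\partial_0$ is zero, so every $0$-chain is a cycle and the projection $q\colon\linfg\longtwoheadrightarrow H_0^{\uf}(G;\R)$ is surjective. On the chain level $\overline{m}_0$ is induced by the right-invariant mean associated with $m$, namely $\varphi\longmapsto m(\check\varphi)$ with $\check\varphi(g)=\varphi(g^{-1})$. Consequently, if $\overline{m}_0$ vanishes on $H_0^{\uf}(G;\R)$, then precomposing with the surjection $q$ shows that $\varphi\mapsto m(\check\varphi)$ is the zero functional on $\linfg$; since inversion $\varphi\mapsto\check\varphi$ is a linear bijection of $\linfg$, this forces $m=0$. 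Hence $\Phi$ is injective.

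Finally I would invoke Chou's theorem, which gives $\dim_\R LM(G)=\infty$. Injectivity of $\Phi$ then yields $\dim_\R V^{\ast}\geq\dim_\R LM(G)=\infty$, so $V^{\ast}$ is infinite dimensional and therefore so is $V=H_0^{\uf}(G;\R)/\widehat{H}_0^{\uf}(G;\R)$. The ``in particular'' assertion is immediate: a vector space admitting an infinite-dimensional quotient is itself infinite dimensional. I expect the main obstacle to be the injectivity of $\Phi$ above --- one must check that the transfer functional genuinely retains all the information of the mean $m$, which is exactly where the vanishing of $\partial_0$ (all $0$-chains are cycles) and the bijectivity of inversion on $\linfg$ are used; the remaining steps are formal duality together with the input of Theorem~\ref{t:Chou}.
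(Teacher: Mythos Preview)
Your proposal is correct and follows essentially the same route as the paper: construct an injection $LM(G)\hookrightarrow\bigl(H_0^{\uf}(G;\R)/\widehat{H}_0^{\uf}(G;\R)\bigr)^{\ast}$ via the degree-zero transfer and conclude using Chou's theorem. You in fact spell out the injectivity step (surjectivity of the cycle projection $\ell^\infty(G)\twoheadrightarrow H_0^{\uf}(G;\R)$ combined with the bijectivity of inversion) more explicitly than the paper, which simply asserts that the inclusion $LM(G)\subseteq\ell^\infty(G)^{\ast}$ descends to an injection into $H_0(G;\ell^\infty(G))^{\ast}$.
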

\begin{proof}
By definition we have the canonical inclusion
\begin{align*}
 LM(G) &\subseteq \ell^{\infty}(G)^\ast =C_0(G;\ell^{\infty}(G))^\ast.
\intertext{Since this inclusion corresponds to the construction of the chain maps in Proposition~\ref{thm: mean} in degree 0, this induces a well-defined injection}
 LM(G)&\longhookrightarrow H_0(G;\ell^{\infty}(G))^\ast\\
 m&\longmapsto ([g\otimes\varphi] \longmapsto m(\varphi)).
\intertext{In view of Remark~\ref{uf-infty} and Remark~\ref{t:Mattgias}, this translates into an inclusion}
 LM(G)&\longhookrightarrow H_0^{\uf}(G;\R)^\ast\\
 m&\longmapsto ([c] \longmapsto \overline{m}_0(c)).
\intertext{By the definition of $\widehat{H}_0^{\uf}(G;\R)$ this also induces an injection}
 LM(G)&\longhookrightarrow \Bigl(H_0^{\uf}(G;\R)/\widehat{H}_0^{\uf}(G;\R)\Bigr)^\ast.
\end{align*}
Hence, by Theorem~\ref{t:Chou} we have $\dim_\R H_0^{\uf}(G;\R)/\widehat{H}_0^{\uf}(G;\R) = \infty.$\qedhere
\end{proof}

\subsection{Higher degrees}
Now we consider uniformly finite homology in higher degrees. We construct infinitely many different classes coming from elements in $\ell^{\infty}(G)$ that are invariant with respect to the action of an infinite index subgroup.

\begin{thm}
\label{not-normal}
Let $n\in\N$ and let $G$ be a finitely generated amenable group. Let $H\leq G$ be a subgroup such that $[G:H]=\infty$ and that the map
\begin{align*}
 H_n(i)\colon H_n(H;\R)\longrightarrow H_n(G;\R)
\end{align*}
induced by the inclusion $i:H\longhookrightarrow G$ is non-trivial. Then $\dim_\R(H_n^{\uf}(G;\R)) = \infty$. 
\end{thm}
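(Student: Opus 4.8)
The plan is to realize $H_n^{\uf}(G;\R)$ through $H_n(G;\ell^\infty(G))$ (Remark~\ref{uf-infty}) and to manufacture, out of the single nonzero class $H_n(i)(\alpha)$, an infinite-dimensional family of cycles indexed by the $H$-invariant functions on $G$, which are then separated by invariant means exactly as in the degree-zero case (Theorem~\ref{zerocase}).

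First I would fix a class $\alpha\in H_n(H;\R)$ with $H_n(i)(\alpha)\neq 0$ and choose a chain $z=\sum_j a_j\sigma_j$ with $\sigma_j\in H^{n+1}$ representing $\alpha$. Since $z$ is a cycle only modulo the $H$-action, its honest boundary has the form $\partial z=\sum_k(h_k-1)w_k$ with $h_k\in H$ and $w_k\in C_{n-1}(G;\R)$. For a function $\varphi\in\ell^\infty(G)$ that is invariant under the left $H$-action (equivalently $\varphi\in\ell^\infty(H\backslash G)$), I would form $\zeta_\varphi:=z\otimes\varphi\in C_n(G;\ell^\infty(G))$. The decisive computation, using the tensor relations over $\R[G]$, is that $\partial\zeta_\varphi=(\partial z)\otimes\varphi=\sum_k w_k\otimes(h_k^{-1}-1)\varphi$ vanishes, precisely because each $h_k\in H$ fixes $\varphi$; thus $H$-invariance of $\varphi$ is exactly what turns $z\otimes\varphi$ into a cycle. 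This produces a linear map $\Phi\colon\ell^\infty(H\backslash G)\to H_n(G;\ell^\infty(G))$, $\varphi\mapsto[\zeta_\varphi]$, and the whole theorem reduces to showing that $\Phi$ has infinite rank.

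Next I would detect these classes by the transfer maps of Proposition~\ref{thm: mean}. For $m\in M(G)$ that proposition gives $m_\ast([\zeta_\varphi])=[m(\varphi)\cdot z]=m(\varphi)\cdot H_n(i)(\alpha)$, since $z$ represents $H_n(i)(\alpha)$ in $H_n(G;\R)$. Choosing a linear functional $\lambda\colon H_n(G;\R)\to\R$ with $\lambda(H_n(i)(\alpha))=1$, the composites $F_m:=\lambda\circ m_\ast$ satisfy $F_m\circ\Phi=m|_{\ell^\infty(H\backslash G)}$. As $m\mapsto F_m$ is linear, the adjoint $\Phi^\ast$ has the restrictions of $\LM(G)$ to $\ell^\infty(H\backslash G)$ in its image; a finite rank of $\Phi$ would force $\Phi^\ast$ to have finite rank, so it suffices to show that these restrictions span an infinite-dimensional space.

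This last point is where the hypotheses $[G:H]=\infty$ and amenability really enter, and I expect it to be the main obstacle. Since $H$ is a subgroup of an amenable group it is itself amenable, so averaging over $H$ with a bi-invariant mean yields a $G$-equivariant conditional expectation $E\colon\ell^\infty(G)\to\ell^\infty(H\backslash G)$ that is the identity on $\ell^\infty(H\backslash G)$; via $E$ the invariant means of the $G$-set $H\backslash G$ are exactly the restrictions of invariant means of $G$. Because $[G:H]=\infty$ the coset space $H\backslash G$ is an infinite $G$-set, and the remaining task is a Chou-type statement: an infinite coset space of an amenable group carries infinitely many linearly independent invariant means. I would prove this by pushing a F\o lner sequence of $G$ forward to $H\backslash G$ and extracting, along different ultrafilters, infinitely many invariant means that are separated by functions supported on disjoint regions of $H\backslash G$ --- the Day--Chou doubling mechanism behind Theorem~\ref{zerocase} and Theorem~\ref{t:Chou}, now carried out on the coset space rather than on $G$ itself. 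Granting this, $\Phi$ has infinite rank, whence $\dim_\R H_n^{\uf}(G;\R)=\infty$.
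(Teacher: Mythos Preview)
Your overall architecture matches the paper exactly: form cycles $i(c)\otimes\varphi$ for $\varphi\in\ell^\infty(G)^H$, apply the transfer $m_*$ to get $m(\varphi)\cdot[i(c)]$, and reduce everything to producing infinitely many linearly independent restrictions $m|_{\ell^\infty(G)^H}$ of left-invariant means. The paper gives two proofs of this last step; your proposal is essentially its \emph{second} proof, and the gap lies precisely where the paper inserts the extra hypothesis ``$H$ normal''.

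The claim that averaging over $H$ yields a $G$-equivariant conditional expectation $E\colon\ell^\infty(G)\to\ell^\infty(H\backslash G)$ fails for non-normal $H$. With the left-translation action $(g\cdot\varphi)(g')=\varphi(g^{-1}g')$ used throughout, the subspace $\ell^\infty(G)^H$ is not even $G$-stable: $(g\cdot\varphi)(hg')=\varphi(g^{-1}hg')$ equals $\varphi(g^{-1}g')$ only when $g^{-1}hg\in H$. The paper's map $\tau$ (its version of your $E$) is shown to be $G$-equivariant only after invoking $h^{-1}gH=gH$, i.e., normality; without it, $\mu\circ E$ need not be left $G$-invariant, so ``invariant means of the $G$-set $H\backslash G$'' cannot be identified with restrictions of left-invariant means on $G$ in the way you need. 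Pushing a F\o lner sequence to $H\backslash G$ runs into the same mismatch: the resulting means are invariant for the \emph{right} $G$-action on the coset space, not the left action relevant to Proposition~\ref{thm: mean}.

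The paper's first proof sidesteps this by working entirely inside $G$. Lemma~\ref{separate} shows (using a bi-invariant mean and $[G:H]=\infty$) that for finite $T,T'\subset G$ some translate $Tg$ has $\pi(Tg)\cap\pi(T')=\emptyset$; iterating, Theorem~\ref{t:infinitemeans} builds pairwise disjoint $H$-saturated sets $T^k=\bigcup_l H\cdot A^k_l$, each containing a translate of every ball and hence left-thick in Mitchell's sense, so each supports a left $G$-invariant mean $m_k$ with $m_k(\chi_{T^j})=\delta_{kj}$. This delivers the separating pairs $(m_j,f_j)$ directly, without ever needing a $G$-map to the coset space. Your ``disjoint regions of $H\backslash G$'' intuition is the right picture, but the construction has to be carried out on $G$ and paired with Mitchell's criterion rather than routed through $E$.
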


The idea behind the proof of Theorem \ref{not-normal} is to construct a family of infinitely many means on $G$ that can be distinguished by a family of $H$-invariant functions (Theorem~\ref{t:infinitemeans}). Mitchell~\cite[Theorem 7]{M} gives a condition for a subset $S\subset G$ to support a left $G$-invariant mean~$m$ such that $m(\chi_S)=1$. We construct infinitely many such subsets and we separate them using Lemma~\ref{separate}. We will also give an alternative proof of Theorem~\ref{not-normal} independent of the result of Mitchell in the case that $H$ is a normal subgroup of $G$.

\begin{lemma}\label{separate}
Let $G$ be an amenable group and $H\leq G$ such that $[G:H]=~\infty$. Let $\pi\colon G\longrightarrow H\backslash G$ be the canonical projection. Then for any pair of finite subsets $T, T' \subseteq G$ there exists $g\in G$ such that $\pi\bigl(T\cdot g\bigr)\cap\pi\bigl(T'\bigr)=\emptyset$.
\end{lemma}

\begin{proof}
For each $g\in G$ such that  $\pi(T\cdot g)\cap\pi(T')\neq\emptyset$ there exist $t\in T$ and $t' \in T'$ such that $tg\in Ht'$, hence $g\in t^{-1}Ht'$. Thus, if $\pi(T\cdot g)\cap\pi(T')\neq\emptyset$ for all $g\in G$, then $G=T^{-1}HT'$. Since $G$ is amenable, there exists a bi-$G$-invariant mean $m$, inducing a finitely additive probability measure $\mu$ on the power set of $G$, given by $\mu(A)=m(\chi_A)$ \cite[Proposition 4.4.4]{CC}. Moreover,  since $H$ has infinite index in $G$ it follows that $\mu(H)=0$. Then we have:

\[
1=\mu(G)=\mu(T^{-1}HT')\leq \sum_{t\in T, t'\in T'}\mu(t^{-1}Ht')=\sum_{t\in T, t'\in T'}\mu(H)=0,
\]
hence a contradiction.\qedhere

\end{proof}

\begin{thm}
\label{t:infinitemeans}
Let $G$ be a finitely generated amenable group and $H\leq G$ a subgroup such that $[G:H]=\infty$. Then there exists an infinite family $\left (m_j\right)_{j\in J}$ of left $G$-invariant means and an infinite family $\left (f_j\right)_{j\in J}$  of (left) $H$-invariant functions in $\ell^{\infty}(G)$, such that $m_k(f_j)=\delta_{k,j}$ for any $k,j\in J$.
\end{thm}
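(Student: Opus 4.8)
The plan is to realise each $m_j$ as a left $G$-invariant mean \emph{concentrated} on a suitable subset $S_j\subseteq G$, with $m_j(\chi_{S_j})=1$, and to take $f_j$ to be the characteristic function of the saturation of $S_j$ by right $H$-cosets. Two preliminary observations drive this. First, a function in $\ell^\infty(G)$ is left $H$-invariant precisely when it is constant on every right coset $Hg$, that is, when it factors through $\pi\colon G\to H\backslash G$; in particular $\chi_{\pi^{-1}(A)}$ is left $H$-invariant for any $A\subseteq H\backslash G$. Second, I will use Mitchell's criterion \cite[Theorem 7]{M}: a subset $S\subseteq G$ carries a left $G$-invariant mean $m$ with $m(\chi_S)=1$ if and only if $S$ is \emph{(left) thick}, i.e.\ for every finite $F\subseteq G$ there is a $g\in G$ with $Fg\subseteq S$.

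With these in hand, the heart of the argument is to construct infinitely many thick sets whose images under $\pi$ are pairwise disjoint. Since $G$ is finitely generated it is countable, so fix an exhaustion $F_0\subseteq F_1\subseteq\cdots$ of $G$ by finite sets with $\bigcup_{n}F_n=G$, and index the two families by $J=\N$. By a diagonal induction over the pairs $(j,n)\in\N\times\N$ I will choose elements $g_{j,n}\in G$ so that the ``blocks'' $\pi(F_n\,g_{j,n})\subseteq H\backslash G$ are \emph{pairwise} disjoint. At each step only finitely many blocks have been placed; their union is the image under $\pi$ of a finite set $T'\subseteq G$, and applying Lemma~\ref{separate} with $T=F_n$ produces $g_{j,n}$ with $\pi(F_n\,g_{j,n})\cap\pi(T')=\emptyset$, so the new block avoids all previous ones. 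Setting $S_j:=\bigcup_{n}F_n\,g_{j,n}$, each $S_j$ is thick, because any finite $F$ is contained in some $F_n$ and then $F\,g_{j,n}\subseteq S_j$; hence Mitchell's criterion yields a left $G$-invariant mean $m_j$ with $m_j(\chi_{S_j})=1$.

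It remains to define the functions and check the duality relation. Put $A_j:=\pi(S_j)$ and $f_j:=\chi_{\pi^{-1}(A_j)}$; by the first observation each $f_j$ is left $H$-invariant, and by construction the $A_j$ are pairwise disjoint. Since $S_k\subseteq\pi^{-1}(A_k)$ and means are monotone and normalised, $m_k(f_k)\geq m_k(\chi_{S_k})=1$, so $m_k(f_k)=1$. For $j\neq k$, disjointness of $A_j$ and $A_k$ forces $\pi^{-1}(A_j)\cap S_k=\emptyset$, whence $0\leq f_j\leq \chi_{G\setminus S_k}$ and $m_k(f_j)\leq 1-m_k(\chi_{S_k})=0$. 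Thus $m_k(f_j)=\delta_{k,j}$ for all $k,j\in J$, as required.

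The step I expect to be the main obstacle is the simultaneous construction in the second paragraph: thickness forces every $S_j$ to be large (it must contain a translate of each finite subset of $G$), yet infinitely many of them are required to remain coset-disjoint. The infinite-index hypothesis is exactly what makes this possible, entering solely through Lemma~\ref{separate}, which guarantees that a translate of any finite set can be pushed off any prescribed finite union of cosets; the diagonal bookkeeping then turns this one-step separation into the desired infinite disjoint family.
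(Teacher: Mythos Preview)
Your proof is correct and follows essentially the same approach as the paper's: both use Lemma~\ref{separate} iteratively to place right-translates of an exhausting family of finite sets so that their images in $H\backslash G$ are pairwise disjoint, then invoke Mitchell's criterion to obtain means concentrated on the resulting thick sets, with the $H$-invariant functions being the characteristic functions of the $H$-saturations. The only cosmetic differences are that the paper uses word-metric balls $B_l(e)$ rather than an arbitrary exhaustion $(F_n)_n$, saturates by $H$ before applying Mitchell (so that the thick set and the support of $f_k$ coincide), and first carries out the construction for finitely many indices before remarking that a modified induction yields an infinite family directly---your diagonal induction over $\N\times\N$ handles this last point more cleanly.
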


\begin{proof}
Let $n\in \N$ and let $\pi\colon G\longrightarrow H\backslash G$ be the canonical projection. Consider $G$ equipped with the word metric with respect to a (finite) set of generators. We inductively construct finite sets $A^{k}_{l}\subseteq G$ for all $k\in \{1, \dots, n\}$ and for all $l\in \N$ such that

\begin{itemize}

\item The family $\left ( \pi(A^{k}_{l})\right)_{k\in \{1, \dots, n\}, l\in \N}$ is pairwise disjoint.

\item For all  $k\in \{1, \dots, n\}$ and $l\in \N$ there exists $g\in G$ such that $A^{k}_{l}=B_{l}(e)\cdot g$, where $B_{l}(e)$ denotes the ball of radius $l$ centered at the identity element.

\end{itemize}

Let $A^{1}_{1}:=B_{1}(e)$; assume that the sets have been constructed for all indices smaller or equal than $(l,k)$ (using lexicographic order). Then:

\begin{itemize}

\item If $k< n$,  by Lemma~\ref{separate} there exists $g\in G$ such that 

\[
\pi\Biggl(\bigcup_{(l',k')\leq (l,k)}A^{k'}_{l'}\Biggr)\cap\pi\bigl(B_{l}(e)\cdot g\bigr)=\emptyset.
\]

Set $A^{k+1}_{l}:=B_{l}(e)\cdot g$.

\item If $k=n$, by Lemma~\ref{separate} there exists $g\in G$ such that 

\[
\pi\Biggl(\bigcup_{(l',k')\leq(l,k)}A^{k'}_{l'}\Biggr)\cap\pi\bigl(B_{l+1}(e)\cdot g\bigr)=\emptyset.
\]

Set $A^{1}_{l+1}:=B_{l+1}(e)\cdot g$.

\end{itemize}
 
Now set for all $k\in \{1,\dots, n\}$

\[
T^{k}:=\bigcup_{l\in \N}HA^{k}_{l}.
\]
Notice that $T^1, \dots, T^n$ are pairwise disjoint by construction.

For any finite subset $F\subseteq G$, there exists $r\in \N$ such that $F\subseteq B_{r}(e)$, hence, for all~$k\in\{1, \dots,n\}$ there exists $g\in G$ such that $F\cdot g\subseteq B_{r}(e)\cdot g\subseteq T^{k}$. This implies that each $T^{k}$ is left-thick in the sense of Mitchell~\cite{M}. Hence, ~\cite[Theorem 7]{M}, for each~$k\in\{1, \dots,n\}$ there exists a left $G$-invariant mean $m_k$, such that $m_k(\chi_{T^k})=1$. Moreover, for each $k\in\{1, \dots,n\}$ and for each $j\in\{1, \dots,n\}\backslash\{k\}$ we have that $m_k(\chi_{T^j})=0$ since the sets $T^1, \dots, T^n$ are pairwise disjoint and $m_k(G)=1$. 
By definition of $T^k$, the functions $\chi_{T^k}$ are $H$-invariant for any $k\in\{1, \dots,n\}$.

We have, then, constructed a family of left $G$-invariant means $\{m_k\}_{k\in\{1, \dots,n\}}$ and a family of bounded $H$-invariant functions $\{\chi_{T^k}\}_{k\in\{1, \dots,n\}}$ on $G$. We can repeat the same construction for any $n\in \N$, so we can have an arbitrary large finite family of means and of functions satisfying the theorem. Using a slightly different induction step, it is possible to construct a family of finite sets $A^{k}_{l}\subseteq G$ for all $k\in\N$ and for all $l\in\N$ as above. In this way we are able to construct directly an infinite family of means and of functions satisfying the theorem.
\end{proof}

We are now ready to prove our main theorem. 

\begin{proof}[First proof of Theorem~\ref{not-normal}]
If $\dim_{\R}H_n(G;\R)=\infty$, then by Proposition~\ref{thm: mean}, we immediately conclude that $H_{n}^{\uf}(G;\R)$ must be infinite dimensional.

Therefore, we assume $\dim_{\R}H_n(G;\R)<\infty$. We will prove the statement for  $H_n(G;\ell^{\infty}(G))$ and in view of Remark ~\ref{uf-infty}, the theorem will also follow for $H_n^{\uf}(G;\R)$. 

Let $c\in C_n(H;\R)$ be a cycle such that $[i(c)]\in H_n(G;\R)$ is non-trivial. Consider the space $\ell^{\infty}(G)^{H}:=\bigl\{\varphi\in\ell^{\infty}(G) \; \bigm | \; \fa{h\in H}  h\cdot\varphi=\varphi \bigr\}$. Then
\begin{align*}
S_c :=\bigl\{ i(c)\otimes \varphi\in C_{n}(G;\ell^{\infty}(G))\; \bigm | \; \varphi\in\ell^{\infty}(G)^H \bigr\} 
\end{align*}
is a subspace of cycles in $C_{n}(G;\ell^{\infty}(G))$. Indeed, for every $\varphi\in\ell^{\infty}(G)^H$ one can define the $H$-equivariant map $f_{\varphi}\colon\R\longrightarrow\ell^{\infty}(G), \ 1\longmapsto\varphi$, which induces a chain map $C_{n}(i,f_{\varphi}): C_{n}(H;\R)\longrightarrow C_{n}(G;\ell^{\infty}(G))$ that maps $c$ to $i(c)\otimes \varphi$. 

Let  $\left (m_j\right)_{j\in J}$ and $\left (f_j\right)_{j\in J}$ be as in Theorem~\ref{t:infinitemeans}; since for all $j\in J$, the map $f_j$ is $H$-left invariant, the elements $\bigl(i(c)\otimes f_{j}\bigr)_{j\in J}\in C_n(G;\ell^{\infty}(G))$ belong to $S_c$, so they are all cycles in $C_n(G;\ell^{\infty}(G))$. 
Now consider the family of induced transfer maps $\bigl(m_{{j}{\ast}}\bigr)_{j\in J}\in\Hom_{\R}\bigl(H_n(G;\ell^{\infty}(G)),H_n(G;\R)\bigr)$. For any $k,j\in J$, by Theorem~\ref{t:infinitemeans}, $m_{{k}{\ast}}([i(c)\otimes f_j])=\delta_{i,j}\cdot [i(c)]$, hence the family $\bigl(m_{{j}{\ast}}\bigr)_{j\in J}$ is linearly independent in $\Hom_{\R}\bigl(H_n(G;\ell^{\infty}(G)),H_n(G;\R)\bigr)$. Therefore $H_n(G;\ell^{\infty}(G))$ is infinite dimensional.
\end{proof}

Now we give a more direct proof of the main result under the additional assumption that $H$ is normal in $G$. 

\begin{proof}[Second proof of Theorem~\ref{not-normal}]

Let $H$ now be normal in $G$. Since $H$ is a subgroup of the amenable group $G$, it is also amenable. Let $m_0$ be a left $H$-invariant mean. Consider the following transfer map 
\begin{align*}
\tau\colon \ell^{\infty}(G)&\longrightarrow \ell^{\infty}(G/  H)\\
\varphi &\longmapsto \left(g H \longmapsto m_0((g^{-1}\cdot\varphi)|_{H })\right).
\end{align*}
For any $\varphi\in\ell^{\infty}(G)$, the map $\tau(\varphi)$ is well-defined. Indeed, let $g_1, g_2$ be elements of $G$ such that $g_{1}H=g_{2}H$; then $g_2=g_{1}h$ for some $h\in H$, hence
\[
\tau(\varphi)(g_{2}H)=m_{0}((g_{2}^{-1}\cdot\varphi)|_{H})=m_{0}(({h}^{-1}g_{1}^{-1}\cdot\varphi)|_{H})=m_{0}((g_{1}^{-1}\cdot\varphi)|_{H})=\tau(\varphi)(g_{1}H)
\]
by the $H$-invariance of $m_0$. It is clear that $\tau(\varphi)$ is bounded for any $\varphi\in\ell^{\infty}(G)$.

We also have a map induced by the canonical projection $\pi\colon G\longrightarrow G / H$
\begin{align*}
\pi^{\ast}\colon \ell^{\infty}(G/H)&\longrightarrow \ell^{\infty}(G)\\
\psi &\longmapsto \psi\circ\pi.
\end{align*}
It is easy to see that $\Ima(\pi^\ast)\subseteq\ell^{\infty}(G)^{H}$; indeed, for any $\psi\in \ell^{\infty}(G/H)$, any $h\in H$ and any $g\in G$,
\[
h\cdot\pi^\ast(\psi)(g)=\pi^\ast(\psi)(h^{-1}g)=\psi(h^{-1}gH)=\psi(gH).
\]
Notice that the equality $h^{-1}gH=gH$ holds since we have assumed $H$ to be normal in $G$. The following composition
\begin{align*}
\ell^{\infty}(G/H)&\xrightarrow{\pi^{\ast}} \ell^{\infty}(G)^{H}\xrightarrow{\tau{|}_{\ell^{\infty}(G)^{H}}}\ell^{\infty}(G/H)
\end{align*}
is the identity on $\ell^{\infty}(G/H)$: Indeed, for any $\psi\in \ell^{\infty}(G/H)$ and any class $gH\in G/H$ we have
\[
\tau(\psi\circ\pi)(gH)=m_{0}(g^{-1}\cdot(\psi\circ\pi))|_{H})=m_{0}(h\mapsto\psi(gH))=\psi(gH),
\]
since the function $h\mapsto\psi(gH)$ is constant. In particular, $\tau{|}_{\ell^{\infty}(G)^{H}}$ is surjective.
It is easy to see that $\tau(\chi_G)=1$ and that $\tau(\varphi)\geq 0$ for any $\varphi\geq 0$ in $\ell^{\infty}(G)$. Moreover, $\tau$ is $G$-equivariant: indeed, for any $g, g'\in G$ and any $\varphi\in\ell^{\infty}(G)$, we have:
\[
\begin{split}
\tau(g'\cdot\varphi)(gH)& = m_0((g^{-1}\cdot g'\cdot\varphi)|_H)\\
 & =m_0((({g'}^{-1}g)^{-1}\cdot\varphi)|_H)\\
 & =\tau(\varphi)({g'}^{-1}gH)\\
 & ={g'}\cdot\tau(\varphi)(gH).
\end{split}
\]
By dualising $\tau$ we get a map
\begin{align*}
\tau^{\ast} \colon \ell^{\infty}(G / H)^{\ast} &\longrightarrow \ell^{\infty}(G)^{\ast}\\
F &\longmapsto F \circ \tau.
\intertext{Since $H$ is a normal  subgroup of $G$ with infinite index, $G/H$ is also an infinite amenable group, so restricting $\tau^{\ast}$ to $LM(G/H)$ we have a well-defined map}
\tau^{\ast} \colon LM(G / H) &\longrightarrow LM(G):
\end{align*}
Indeed, let $m\in M(G/H)$ be a mean on $G/H$. Then, since $\tau(\chi_G)=1$ and $\tau(\varphi)\geq 0$ for any $\varphi\geq 0$ in $\ell^{\infty}(G)$, it immediately follows that $\tau^{\ast}(m)$ satisfies condition (i) and (ii) of Definition~\ref{definitionmean}. Condition (iii) follows by the $G$-equivariance of $\tau$, which implies that $\tau^{\ast}(m)$ is a left $G$-invariant mean.

Let $[S_c]$ be the space of classes represented by elements of $S_c$ as defined in the first proof of Theorem~\ref{not-normal}.
Now consider the following diagram.
\begin{center}
\begin{tikzpicture}
\matrix (m) [scale = 0.25,matrix of math nodes, row sep=3.5em,
column sep=2em, text height=1.5ex, text depth=1.5ex]
{LM(G)&\Hom_{\R}\bigl(H_n(G;\ell^{\infty}(G)),H_n(G;\R)\bigr)\\
LM(G / H)&\Hom_{\R}\bigl([S_c],H_n(G;\R)\bigr).
\\ };
\path[->]
(m-1-1) edge node[auto] {$ \Phi $}  (m-1-2)
(m-2-1) edge node[auto] {$ \tau^\ast $}  (m-1-1)
(m-1-2) edge node[auto] {$\res $}  (m-2-2)
(m-2-1) edge node[below] {$ \overline{\Phi} $}  (m-2-2);
\end{tikzpicture}
\end{center}
where the upper horizontal map $\Phi$ takes every mean $m\in M(G)$ to the induced homomorphism $m_\ast$. Here $\res$ denotes the restriction map and the lower horizontal map is given by composition. 

Since $H_n(G;\R)$ can be assumed to be finite dimensional, to prove the theorem it suffices to show that $\overline{\Phi}$ is injective; indeed, by Theorem ~\ref{t:Chou} of Chou, since $G/H$ is infinite amenable, $LM(G/H)$ is infinite dimensional and from the injectivity of $\overline{\Phi}$ it would follow that $[S_c]\subseteq H_n(G;\ell^{\infty}(G))$ must be infinite dimensional.

We now show that $\overline \Phi$ is injective. So, consider two means $m_1\neq m_2 \in LM(G/H)$;  then there exists $\psi\in\ell^{\infty}(G/H)$ such that $m_1(\psi)\neq m_2(\psi)$. By surjectivity of $\tau{|}_{\ell^{\infty}(G)^H}$ we know that there exists $\varphi\in\ell^{\infty}(G)^H$ such that $\tau(\varphi)=\psi$. Consider now the image of $[i(c)\otimes\varphi]\in [S_c]$ by the homomorphisms $\overline{\Phi}(m_1), \overline{\Phi}(m_2)\in\Hom([S_c],H_n(G;\R))$. We have for $k\in\{1,2\}$
\[
\overline{\Phi}(m_k)([i(c)\otimes\varphi])=\Phi(\tau^\ast(m_k))([i(c)\otimes\varphi])=[m_k(\tau(\varphi))\cdot i(c)]=m_k(\psi)\cdot[ i(c)].
\]
Since $m_1(\psi)\neq m_2 (\psi)$ and $[i(c)]\neq 0$, the two classes $m_1(\psi)\cdot [i(c)]$ and $m_2(\psi)\cdot [ i(c)]$ are also different in $H_n (G;\R)$, which implies that $\overline{\Phi}(m_1)\neq \overline{\Phi}(m_2)$; in particular, $\overline{\Phi}$ is injective.
\end{proof}

\section{Examples}\label{examples}

\begin{cor}\label{c:onecase}
 Let $G$ be a finitely generated amenable group. Assume that $H_1(G;\R)$ is non-trivial, i.e., that the abelianization of $G$ is not a torsion group. Then 
\begin{align*}
 H_1^{\uf}(G;\R)\cong \begin{cases}
                \R &\text{ if $G$ is virtually $\Z$}\\
		\text{infinite dimensional} &\text{ otherwise.}
               \end{cases}
\end{align*}
\end{cor}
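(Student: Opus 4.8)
The plan is to split into the two cases and invoke Theorem~\ref{not-normal} for the non-trivial direction. First I would treat the easy case where $G$ is virtually $\Z$. Such a group is quasi-isometric to $\Z$, so by the quasi-isometry invariance of uniformly finite homology (Proposition~\ref{t:Franziska}) it suffices to compute $H_1^{\uf}(\Z;\R)$. One knows that $H_*^{\uf}(\Z;\R)$ is concentrated in degrees $0$ and $1$, with $H_1^{\uf}(\Z;\R)\cong\R$ generated by the fundamental class (the sum of all edges $(n,n+1)$); this is a standard computation, and indeed it also falls out of the Nilpotent-groups Example quoted earlier with Hirsch rank $h=1$. So $H_1^{\uf}(G;\R)\cong\R$ in this case.

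For the remaining case I would apply Theorem~\ref{not-normal} with $n=1$. The hypothesis $H_1(G;\R)\neq 0$ means the map $H_1(i)\colon H_1(H;\R)\to H_1(G;\R)$ is non-trivial for a suitable subgroup $H\leq G$ of infinite index; the natural candidate is to pick an element $g\in G$ whose class is non-trivial in the abelianization $H_1(G;\R)=(G^{\mathrm{ab}}\otimes\R)$ and set $H:=\langle g\rangle\cong\Z$ (the order of $g$ in $G^{\mathrm{ab}}$ must be infinite precisely because its class in $H_1(G;\R)$ is non-zero). Then $H_1(H;\R)\cong\R$ is generated by the class of $g$, and $H_1(i)$ sends this generator to the non-zero class of $g$ in $H_1(G;\R)$, so $H_1(i)$ is non-trivial. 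Provided $[G:H]=\infty$, Theorem~\ref{not-normal} then yields $\dim_\R H_1^{\uf}(G;\R)=\infty$, as desired.

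The main obstacle is verifying that one can arrange $[G:H]=\infty$, i.e.\ that a cyclic $H$ witnessing the non-triviality of $H_1(i)$ has infinite index whenever $G$ is \emph{not} virtually $\Z$. This is exactly where the dichotomy in the statement comes from: if some such cyclic $H=\langle g\rangle$ had \emph{finite} index, then $G$ would be virtually cyclic, hence (being infinite, as $H_1(G;\R)\neq 0$ forces $G$ to be infinite) virtually $\Z$, contradicting the assumption. Conversely, if $G$ is not virtually $\Z$, then every infinite cyclic subgroup has infinite index, so the witnessing $H$ automatically satisfies $[G:H]=\infty$ and Theorem~\ref{not-normal} applies. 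I would write this contrapositive carefully, since it is the one genuinely non-formal point; the rest is assembling the two cases.
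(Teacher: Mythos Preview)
Your proposal is correct and follows essentially the same route as the paper: pick $g\in G$ whose class in $H_1(G;\R)\cong G^{\mathrm{ab}}\otimes\R$ is non-zero, set $H=\langle g\rangle\cong\Z$, observe that $H$ has infinite index unless $G$ is virtually $\Z$, and apply Theorem~\ref{not-normal}; the virtually $\Z$ case is handled by quasi-isometry invariance and the computation for $\Z$ (the paper defers this to the subsequent example for $\Z^l$). Your write-up is somewhat more explicit about why $g$ has infinite order and why finite index of $\langle g\rangle$ forces $G$ to be virtually $\Z$, but the argument is the same.
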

\begin{proof}
Let $g\in G$ be an element such that $(1,g)\in H_1(G;\R)$ is a non-trivial cycle. Such a $g$ exists by assumption and the isomorphism $H_1(G;\R)\cong G_\text{ab}\otimes \R$. 
If $G$ is not virtually $\Z$, the claim follows from Theorem~\ref{not-normal}, since then $[G:\langle g\rangle]=\infty$. And for the virtually $\Z$ case see the following example. \qedhere
\end{proof}

\begin{exa}
 Let $G$ be a finitely generated infinite amenable group. Consider  semi-direct products of the form $G\rtimes \Z^l$ for $l\in\N$. Then for all $k\in\{0,\dots,l\}$
\[
 \dim_\R H_k^{\uf}(G\rtimes \Z^l;\R) =\infty.
\]
Similar results hold if one replaces $\Z^l$ by an amenable group with non-vanishing homology in the correct degrees.  
In particular, for all $l\in\N$ 
\begin{align*}
 H^{\uf}_k(\Z^l;\R) =
 \begin{cases}
  \R &\text{ if $k=l$}\\
  \text{infinite dimensional} &\text{ if  $k\in\{0,\dots,l-1\}$}\\
 0 &\text{ else.}
 \end{cases}
\end{align*}
\end{exa}
\begin{proof}
 The splitting map $\Z^l\longhookrightarrow G\rtimes \Z^l$ induces a non-trivial map in degree $0,\dots,l$ in homology and hence the first part follows from Theorem~\ref{not-normal}. The product $\Z^l=\Z^{l-1}\times\Z$ is a special case. The degrees $k\geq l$ follow because the group is a Poincar\'e duality group and $H^0(G;\ell^{\infty}(G)) \cong \ell^{\infty}(G)^G \cong \R$ for all groups $G$. 
\end{proof}
\begin{exa}
 For the integral three-dimensional Heisenberg group $\Heis_3$ we get:
\begin{align*} 
 H_k^{\uf}(\Heis_3;\R) = 
 \begin{cases}
  \R &\text{ if $k=3$}\\
  \text{infinite dimensional} &\text{ if  $k \in \{0,1,2\}$}\\
 0 &\text{ else.}
 \end{cases}
\end{align*}
\end{exa}
\begin{proof}
 We only have to consider $k=2$, the cases $k=0,1$ follow directly from Theorem~\ref{zerocase} and Corollary~\ref{c:onecase}. The higher degrees are a consequence of Poincar\'e duality. Consider the presentation $\Heis_3 \cong \langle x,y,z \mid [x,y],[y,z], y^{-1}[x,y]\rangle.$ By Hopf's Theorem,~\cite[II.5, Theorem 3]{B},to compute the second homology it suffices to look at the Schur multiplier which, in this case, is generated by the symbols $[x,y],[y,z]$. Hence the inclusion of the subgroup generated by $x$ and $y$ is non-trivial in homology in degree 2, so we can apply Theorem~\ref{not-normal} again.
\end{proof}
Actually, the last examples are simple special cases of a more general result:
\begin{exa}\label{e:nilpotent}
Let $G$ be a finitely generated virtually nilpotent group of Hirsch rank $h\in\N$. Then
\begin{align*} 
 H_k^{\uf}(G;\R) = 
 \begin{cases}
  \R &\text{ if $k=h$}\\
  \text{infinite dimensional} &\text{ if  $k \in \{0,\dots, h-1\}$}\\
 0 &\text{ else.}
 \end{cases}
\end{align*}
\end{exa}
\begin{proof}
 We closely follow the calculation of the homology groups of nilpotent groups of Baumslag, Miller and Short~\cite{BMS} to see that our condition in Theorem~\ref{not-normal} is satisfied in degree $0,\dots, h-1$.  After passing to a finite index subgroup, we may assume that $G$ is torsion-free and nilpotent. 
 Then we can write $G$ as a split extension of the form
\[
 1\longrightarrow N \longrightarrow G\longrightarrow \Z\longrightarrow 1
\]
for $N\subset G$ a normal subgroup of Hirsch rank $h-1$. The Hochschild-Serre spectral sequence for this split extension induces a short exact sequence 
\[
 0\longrightarrow H_0(\Z;H_i(N;\R))\longrightarrow H_i(G;\R) \longrightarrow H_1(\Z;H_{i-1}(N;\R))\longrightarrow 0
\]
for all $i\in\N_{>0}$.
 The map on the left hand side is one edge map of the spectral sequence and is induced by the canonical map $H_i(N;\R) \longrightarrow H_i(G;\R)$ under the identification $ H_i(N;\R)_\Z \cong H_0(\Z,H_i(N;\R))$~\cite{Wei}. In particular, the canonical map $H_i(N;\R) \longrightarrow H_i(G;\R)$ is non-trivial if $ H_i(N;\R)_\Z $ is non-trivial. But these homology groups are non-trivial~\cite[Proof of Theorem 16]{BMS}. The degrees $k\geq h$ follow because finitely generated nilpotent groups are Poincar\'e duality groups~\cite[VIII.10, Example 1]{B}.
\end{proof}
\begin{exa}
Consider $A\in \Sl(2,\Z)$. Then for the semi-direct product $\Z^2\rtimes_{A}\Z$ given by the action of $\Z$ on $\Z^2$ induced by A, we have
\begin{align*} 
 H_k^{\uf}(\Z^2\rtimes_{A} \Z;\R) = 
 \begin{cases}
  \R &\text{ if $k=3$}\\
  \text{infinite dimensional} &\text{ if  $k \in \{0,1,2\}$}\\
 0 &\text{ else.}
 \end{cases}
\end{align*}
In particular, this example includes cocompact lattices in Sol~\cite{S}. 
\end{exa}
\begin{proof}
 This is similar to~\ref{e:nilpotent}. We only have to consider $k=2$ the other cases follow as in the other examples. By the Hochschild-Serre spectral sequence we get a short exact sequence 
\[
  0\longrightarrow H_0(\Z,H_2(\Z^2;\R))\longrightarrow H_2(\Z^2\rtimes_{A}\Z;\R) \longrightarrow H_1(\Z,H_{1}(\Z^2;\R))\longrightarrow 0.
\]
By Poincar\'e duality and the universal coefficient theorem, the dimension of the middle term is equal to $\dim_\R H_1(\Z\rtimes_{A}\Z;\R)=3$. Since the dimension of the term at the right-hand-side is at most 2, the map $H_0(\Z,H_2(\Z^2;\R))\longrightarrow H_2(\Z^2\rtimes_{A}\Z;\R)$ is non-trivial. In particular the canonical map $H_2(\Z^2;\R)\longrightarrow H_2(\Z^2\rtimes_{A}\Z;\R)$ is also non-trivial. Hence $H_2^{\uf}(\Z^2\rtimes_{A} \Z;\R)$ is infinite dimensional  by Theorem~\ref{not-normal}. 
\end{proof}

There are many well-known examples of finitely generated amenable groups having non-trivial real homology in each degree. If $G$ is such a group, the group $G\times \Z$ satisfies $\dim_{\R} H_n^{\uf}(G\times \Z;\R) =\infty$ for all $n\in\N$ by Theorem~\ref{not-normal}. For instance:

\begin{exa}
There exists a finitely-presented metabelian group $G$ such that for all $n\in\N$
\[
  \dim_\R H^{\uf}_n(G;\R) =\infty.
\]
\end{exa}
\begin{proof}
 For example, Baumslag and Dyer have shown~\cite{BD}, that Baumslag's met\-abelian group $B:=\langle a,s,t\mid a^s = a\cdot a^t, [a,a^t] =1 = [s,t]\rangle$ has non-trivial homology in degree $n\geq 3$. Set $G:= B\times \Z^3$. 
\end{proof}

\begin{exa}
Consider Thompson's group \text{$F:= \langle x_0,x_1,\dots\mid x_n^{x_i} = x_{n+1} \text{ for } i<n\rangle$.} We have 
\[
 F \text{ amenable} \Longrightarrow \fa{n\in\N} \dim_{\R} H_n^{\uf}(F;\R) = \infty.
\]
\end{exa}
\begin{proof}
 We follow Kenneth Brown's calculation of the homology of $F$ ~\cite{B2}. Consider $F$ as the group of dyadic piecewise linear homeomorphisms $[0,1]\longrightarrow [0,1]$. Brown notes that in this description the commutator subgroup of $F$ can be seen as $F'=\{f\in F\mid f'(0) = 1= f'(1)\}$. There is a product map
\begin{align*}
\ast \colon F\times F&\longrightarrow F\\
 (f,g) &\longmapsto \left(t\longmapsto \begin{cases} f(2\cdot t)/2 & 0\leq t\leq 1/2\\ g(2\cdot t-1)/2 & 1/2\leq t\leq 1.\end{cases}\right)                                                                                                                                                                                                                                                                                                                                                                                                                                                                                                                                                                                                                                   \end{align*}
Define the subgroup $G:= \{f\in F\mid f'(0) =1\}\leq F$. The product map restricts both to $F'$ and $G$ and induces a natural product in homology on $H_\ast(F';\R),H_\ast(G;\R)$ and $H_\ast(F;\R)$. 

Brown~\cite[Theorem 4.1]{B2} shows that $H_\ast(F;\R)$ is generated as a ring by elements $\varepsilon,\alpha,\beta$; that alternating products $\alpha\cdot\beta\cdot \alpha\cdots$ and $\beta\cdot\alpha\cdot\beta\cdots$ form a basis of $H_\ast(F;\R)$ in positive degree and that the image of $H_\ast(F';\R)\longrightarrow H_\ast(F;\R)$ contains $\alpha\cdot \beta\in H_2(F;\R)$ and hence is non-trivial in even degrees.

 Since $F'$ is strictly contained in $G$, the image of the map $H_\ast(G;\R)\longrightarrow H_\ast(F;\R)$ contains a non-trivial linear combination of $\alpha$ and $\beta$ (since they form a basis of~$H_1(F;\R)$), hence the map $H_\ast(G;\R)\longrightarrow H_\ast(F;\R)$ is non-trival in each degree and we may apply Theorem~\ref{not-normal}.
\end{proof}

\section{Sparse classes in $H_0^{\uf}(G;\R)$}\label{sparse}
In this section, we give a geometric condition for classes in $H^{\uf}_0(G;\R)$ to be mean-invisible and show that in an infinite amenable group, there are infinitely many linear independent classes of this type. In particular, we show:

\begin{thm}\label{t:sparse}
 Let $G$ be a finitely generated infinite amenable group. Then \[\dim_{\R} \widehat{H}_0^{\uf}(G;\R) =\infty.\] 
\end{thm}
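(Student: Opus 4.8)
The plan is to produce an infinite-dimensional family of classes in $H_0^{\uf}(G;\R)$ that are annihilated by every invariant mean, exploiting a \emph{sparseness} condition: a $0$-chain $c = \sum_{g} c(g)\cdot g$ supported on a set that is ``thin'' at infinity should be invisible to means, since means assign total mass $0$ to sets of density zero. Concretely, I would first fix a geometric growth hypothesis on the support. Since $G$ is infinite and finitely generated, I would choose a sequence of group elements $(x_i)_{i\in\N}$ escaping to infinity fast enough that the balls $B_1(x_i)$ are pairwise disjoint and spread out, and then build candidate cycles from signed pairs or differences concentrated along this sequence. The key structural input from the earlier part of the paper is the identification $C_0^{\uf}(G;\R)\cong\ell^\infty(G)$ together with the fact (from the degree-zero discussion) that $\overline{m}_0$ coincides with a right-invariant mean applied to the coefficient function. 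Thus a class $[c]$ lies in $\widehat{H}_0^{\uf}(G;\R)$ precisely when $m(\overline{c})=0$ for all $m\in M(G)$, where $\overline{c}$ is the coefficient function.

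First I would characterize mean-invisibility homologically: a class is mean-invisible if and only if its coefficient function, viewed in $\ell^\infty(G)$, pairs to zero with every invariant mean. I would then define a \emph{sparse} coefficient function to be one whose support has zero upper density with respect to the balls, i.e.\ $\lim_{r\to\infty}\sup_{g\in G}\frac{|\{x\in B_r(g)\mid c(x)\neq 0\}|}{|B_r(g)|}=0$, and prove the lemma that any bounded sparse function is killed by every invariant mean (because an invariant mean is dominated by averages over Følner sets, and sparse sets have Følner-density zero). This lemma supplies the geometric sufficient condition promised in the section's opening sentence.

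Next I would construct infinitely many \emph{linearly independent} homology classes satisfying this sparseness condition, following Whyte's method alluded to in the introduction. The idea is to use the fact that $G$ is infinite to find, for each index $j$, a sparse cycle $z_j\in C_0^{\uf}(G;\R)$ (for instance a signed difference $x_j - y_j$ propagated along a sparse escaping sequence so that it is a genuine cycle, i.e.\ the total mass in each $1$-chain boundary relation balances) representing a nonzero class, and then to distinguish the classes $[z_j]$ by a second family of functionals that are \emph{not} means---coarse ``counting along a sequence'' functionals in the style of Whyte's proper quasi-isometry invariants. Linear independence would follow by arranging the supports of the $z_j$ to be mutually separated (reusing a separation argument like Lemma~\ref{separate}) so that the distinguishing functionals form a biorthogonal system.

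The hard part will be simultaneously ensuring three things: that each $z_j$ is a boundary-free uniformly finite cycle (the boundedness and bounded-propagation conditions of Definition~\ref{d:uf} must be respected), that each $[z_j]$ is genuinely nonzero in $H_0^{\uf}(G;\R)$ (so the sparse construction is not accidentally a boundary), and that the whole family remains linearly independent modulo boundaries. Whyte's technique handles nonvanishing and independence via a proper-map or end-counting invariant that survives passage to homology, so the crux is adapting that invariant to the present coarse setting and verifying it distinguishes the sparse classes while the means do not. I expect the verification that the sparse classes are nonzero---separating ``sparse but nontrivial'' from ``sparse and exact''---to be the main obstacle, and I would address it by exhibiting an explicit linear functional on $H_0^{\uf}(G;\R)$, vanishing on all boundaries and on all mean-detectable classes, that takes distinct values on the $[z_j]$.
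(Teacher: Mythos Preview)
Your overall architecture is right---define a sparseness condition guaranteeing mean-invisibility, then produce infinitely many linearly independent sparse classes---and this matches the paper's strategy. But the proposal has a genuine gap at the second step, and the paper's actual mechanism is quite different from what you sketch.

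The paper does \emph{not} distinguish sparse classes by disjoint supports and a biorthogonal system of ``counting'' functionals. Disjointness of supports says nothing about linear independence in $H_0^{\uf}$: two functions with disjoint supports can easily be homologous (their difference can satisfy Whyte's boundary criterion). Your suggested ``signed differences $x_j-y_j$'' are also off-target---in degree zero every bounded function is a cycle, so the issue is never that $z_j$ be a cycle but that it not be a boundary, and a single escaping difference $x_j - y_j$ \emph{is} a boundary. What the paper actually does is reformulate Whyte's criterion (Theorem~\ref{t:whyte}) in terms of a fixed F\o lner sequence $S=(S_j)$: to each chain $c$ one attaches the density function $\beta_c^S(n)=\bigl|\sum_{s\in S_n}c(s)\bigr|/|S_n|$, and compares it asymptotically to the boundary-growth function $\sigma_S(n)=|\partial_1 S_n|/|S_n|$. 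Whyte's criterion then yields well-defined linear functionals $\overline{\gamma_c^S}$ on $H_0^{\uf}(G;\R)$ whenever $\sigma_S\prec\beta_c^S$, and a chain of strict asymptotic inequalities $\sigma_S\prec\beta_{c_0}^S\prec\beta_{c_1}^S\prec\cdots$ forces linear independence of the $[c_n]$ (Lemma~\ref{l:growth}).

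The construction of the sparse classes themselves is also more delicate than choosing an escaping sequence. The paper takes $c_n=\chi_{\Gamma_n}$ for sparse subsets $\Gamma_n$ built by placing pieces of $r$-tilings of increasing radius on the ``rings'' $S_j\setminus S_{j-1}$ of a suitably chosen exhausting F\o lner sequence; the tiling radius $r(j)$ is tuned to grow slowly enough that $\beta_{\chi_{\Gamma_n}}^S$ dominates a prescribed sequence, yet fast enough that $\Gamma_n$ is sparse. This simultaneous control is exactly the ``hard part'' you flagged, and your proposal does not supply a substitute for it. Without a concrete functional on $H_0^{\uf}$ that vanishes on boundaries and separates your $[z_j]$---and without a construction guaranteeing the $z_j$ lie in the right asymptotic regime---the argument does not close.
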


In this section, we always consider finitely generated groups equipped with a word metric with respect to some finite generating set (Definition~\ref{d:wordmetric}). 
\subsection{Distinguishing classes by asymptotic behavior}
Let $G$ be a finitely generated discrete group with a word metric~$d$. For any $S\subseteq G$ let
\[
\partial_r(S):=\{g\in G \; \bigm | \; 0<d(g,S)\leq r \}.
\]

The following theorem due to Whyte \cite[Theorem 7.6]{W} gives a characterization of trivial classes in uniformly finite homology in degree 0.

\begin{thm}\label{t:whyte}
Let $G$ be a finitely generated group. Let $c$ be a cycle in $C_0^{\uf}(G;\R)$. Then $[c] = 0\in H_0^{\uf}(G;\R)$ if and only if
\[
 \exi{C,r\in \N_{>0}}\fa{S\subseteq G \text{ finite}}  \left|\sum_{s\in S}c(s)\right| \leq C\cdot |\partial_r S|. 
\]
\end{thm}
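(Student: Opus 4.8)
The plan is to prove the two implications separately; the forward implication is a direct computation, while the converse is a flow-existence statement that I would reduce to a max-flow--min-cut argument.

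For the forward direction, suppose $[c]=0$, so that $c=\partial_1 b$ for some uniformly finite $1$-chain $b$ of propagation at most $r$ (that is, $b(g_0,g_1)=0$ whenever $d(g_0,g_1)>r$) and with $\|b\|_\infty<\infty$. For a finite set $S\subseteq G$ I would expand
\[
\sum_{s\in S}c(s)=\sum_{s\in S}\Bigl(\sum_{g\in G}b(g,s)-\sum_{g\in G}b(s,g)\Bigr),
\]
and observe that all contributions of pairs with both endpoints in $S$ cancel. Because $b$ has propagation at most $r$, every surviving term involves a pair one of whose endpoints lies in $S$ and the other in $\partial_r(S)$. Since $G$ is finitely generated, each point of $\partial_r(S)$ has at most $N_r:=|B_r(e)|$ neighbours within distance $r$, so the number of surviving pairs is at most $2N_r|\partial_r(S)|$; bounding each coefficient by $\|b\|_\infty$ gives $\bigl|\sum_{s\in S}c(s)\bigr|\le C|\partial_r(S)|$ with $C:=\lceil 2N_r\|b\|_\infty\rceil$. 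This establishes the inequality with the same $r$ coming from the propagation of $b$.

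For the converse, I would reinterpret the problem as the existence of a bounded flow. Fix $C,r$ as in the hypothesis and let $\Gamma_r$ be the bounded-degree graph on vertex set $G$ with an edge between distinct $g,h$ whenever $d(g,h)\le r$. A uniformly finite $1$-chain of propagation $\le r$ is exactly an assignment of real flow values to the directed edges of $\Gamma_r$, and $\partial_1 b$ is its divergence; thus $[c]=0$ amounts to finding a flow $b$ on $\Gamma_r$ with $|b|\le C$ on every edge and divergence equal to $c$. The feasibility criterion for such a flow (the Gale--Hoffman condition) is that for every finite $S$ the net supply $\sum_{s\in S}c(s)$ be at most the total capacity of the edges leaving $S$. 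Every $p\in\partial_r(S)$ is joined to $S$ by at least one edge of $\Gamma_r$ (namely to a nearest point of $S$, at distance $\le r$), so the number of edges across the cut is at least $|\partial_r(S)|$; hence the cut capacity is at least $C|\partial_r(S)|\ge\bigl|\sum_{s\in S}c(s)\bigr|$, and the feasibility condition holds for every finite $S$.

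It remains to upgrade this cut condition into an actual flow on the infinite graph, and this is the step I expect to be the main obstacle. My plan is to exhaust $G$ by finite sets $F_1\subseteq F_2\subseteq\cdots$ and, on each $F_n$, solve a finite feasible-flow problem in which the demand $c$ is imposed at all vertices interior to $F_n$ while the (generally nonzero) net excess is allowed to escape through the outer shell to an auxiliary sink. The finite max-flow--min-cut theorem produces flows $b_n$ bounded by $C$; since each directed edge value lies in the compact interval $[-C,C]$ and there are countably many edges, Tychonoff's theorem yields a subsequence converging pointwise on edges to a flow $b$ with $|b|\le C$ and propagation $\le r$. Because the divergence at a fixed vertex depends only on its finitely many incident edges, and every vertex is eventually interior to $F_n$, the limit $b$ has divergence exactly $c$; thus $b$ is a uniformly finite $1$-chain with $\partial_1 b=c$, so $[c]=0$. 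The delicate points are the correct bookkeeping of the escaping excess through the auxiliary sink (needed because $\sum_{g}c(g)$ need not converge) and verifying that the interior divergence constraints survive in the limit.
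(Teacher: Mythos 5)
The paper itself offers no proof of this statement: it is quoted verbatim from Whyte \cite[Theorem 7.6]{W}, with the subsequent remark (via \cite[Lemma 7.7]{W}) only translating Whyte's $\Z$-coefficient version into the $\R$-coefficient form stated here. So the comparison is really with Whyte's argument, and your proposal is essentially the same mechanism made explicit in flow language: vanishing of $[c]$ means exactly the existence of a $1$-chain of bounded propagation and bounded coefficients with prescribed divergence $c$, and the boundary inequality is the corresponding cut condition; Whyte likewise passes from finite subproblems to the infinite graph by a Hall/max-flow-type theorem together with a compactness limit. Your forward direction is complete and correct: the cancellation of pairs with both endpoints in $S$, the location of the surviving endpoints in $\partial_r(S)$, and the count $2N_r\|b\|_\infty\,|\partial_r S|$ are exactly right (and for word metrics the local finiteness condition (i)(b) of Definition~\ref{d:uf} is automatic, as the paper notes, so $b$ really is just a bounded function on pairs of propagation $\le R_b$).

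For the converse, the one step you flag but do not carry out --- feasibility of the truncated flow problems --- does go through, and is the paragraph you should add. Impose the demand $c(v)$ only at \emph{interior} vertices $v$ of $F_n$, meaning $B_r(v)\subseteq F_n$, and join every non-interior vertex of $F_n$ to the auxiliary vertex by edges of infinite capacity in both directions; this simultaneously restores the global balance required by Hoffman's circulation theorem (so the non-convergence of $\sum_g c(g)$ is harmless) and ensures that a minimal cut never uses a sink edge. A binding cut is therefore a set $S$ of interior vertices, for which $\partial_r S\subseteq F_n$ holds by interiority, and distinct points of $\partial_r S$ are joined to $S$ by distinct $\Gamma_r$-edges, so the cut has capacity at least $C\cdot|\partial_r S|\geq \bigl|\sum_{s\in S}c(s)\bigr|$; since flows are signed, the condition is needed for both signs of the net supply, which is precisely what the absolute value in the hypothesis provides. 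With that inserted, your Tychonoff/diagonal limit is sound: divergence at a fixed vertex involves only its finitely many incident edges, every vertex is eventually interior, and the limit chain has coefficients bounded by $C$ and propagation $\le r$, hence is uniformly finite with $\partial_1 b = c$. In short: correct, and the same route as the cited source, modulo writing out that one feasibility check.
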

\begin{rem}
Whyte states Theorem~\ref{t:whyte} for uniformly finite homology with $\Z$ coefficients. However, for infinite finitely generated groups in degree zero, there is an isomorphisms of uniformly finite homology with $\Z$ coefficients and uniformly finite homology with $\R$ coefficients induced by the canonical inclusion $\Z\longhookrightarrow\R$ \cite[Lemma 7.7]{W}. It is easy to see that this isomorphism translates the original statement of Whyte to Theorem~\ref{t:whyte}. 
\end{rem}

Whyte's criterion can be reformulated in terms of the asymptotic behavior of a degree zero chain in comparison to the behavior of a F\o lner sequence. First, we introduce a notion to compare the asymptotic behavior of functions: 
\begin{def.}Let $\alpha,\beta \colon \N\longrightarrow \R_{>0}$ be two functions.
\begin{enumerate}
\item We write $\alpha\prec \beta$ if \[\lim_{n\to \infty} \frac{\alpha(n)}{\beta(n)}=0.\]
\item We write $\alpha\sim \beta$ if 
\[
 \lim_{n\to \infty} \frac{\alpha(n)}{\beta(n)} \in \R_{>0}.
\]
\item We also write $\alpha\preceq\beta$ if $\alpha\sim\beta$ or $\alpha \prec \beta$. 
\end{enumerate}
\end{def.}

We recall the definition of a F\o lner sequence for finitely generated groups. Also recall that a finitely generated group is amenable if and only if it admits a F\o lner sequence~\cite[Theorem 4.9.2]{CC}.

\begin{def.}
Let $G$ be a fininitely generated group. A \emph{F\o lner sequence} in~$G$ is a sequence~$(S_j)_{j\in\N}$ of non-empty finite subsets of $G$ such that for each $r\in\R_{>0}$
\[
\lim_{j\to\infty}\frac{|\partial_{r}(S_j)|}{|S_j|}=0.
\]
\end{def.}

Now fix a F\o lner sequence $S:= (S_j)_{j\in\N}$ of a finitely generated amenable group~$G$. 

For all $c\in\ell^{\infty}(G)$ we define a function 
\begin{align*}
 \beta_c^{S} \colon \N &\longrightarrow \R\\
n&\longmapsto \frac{|\sum_{s\in S_n}c(s)|}{|S_n|}.
\intertext{
Finally, we also consider the behavior of the boundaries}
 \sigma_S\colon \N&\longrightarrow \R_{>0}\\
 n&\longmapsto \frac{|\partial_1 S_n|}{|S_n|}.
\end{align*}
By comparing the behavior of these functions, we can distinguish between classes in $H_0^{\uf}(G;\R)$: 
\begin{lemma}\label{l:growth} Let $G$ be a finitely generated amenable group and $S$ a F\o lner sequence in $G$. Let $c\in\ell^{\infty}(G)$. 
 \begin{enumerate}
  \item We have $0\preceq \beta^S_c \preceq 1$. 
\item If $\beta^S_c \succ \sigma_S$, then $[c]\neq 0\in H_0^{\uf}(G;\R)$.
\item Conversely, if $[c]\neq 0\in H_0^{\uf}(G;\R)$ then there exists a F\o lner sequence $S'$ such that $\sigma_{S'}\prec \beta_c^{S'}$. 
\item More generally: If $(c_n)_{n\in\N}$ is a sequence in $\ell^{\infty}(G)$ such that $\sigma_S\prec\beta^S_{c_0}$ and 
\[
 \fa{n\in\N} \beta^S_{c_n}\prec \beta^S_{c_{n+1}},
\]
 then the family $([c_n])_{n\in\N}$ of classes in $H_0^{\uf}(G;\R)$ is linearly independent. 
 \end{enumerate}
\end{lemma}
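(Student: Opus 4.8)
The plan is to prove the four parts of Lemma~\ref{l:growth} in order, using Whyte's criterion (Theorem~\ref{t:whyte}) as the central tool together with the F\o lner property of $S$.

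For part (i), the bound $\beta^S_c \preceq 1$ is immediate: since $c$ is bounded, say $|c(g)| \leq M$ for all $g \in G$, we have $|\sum_{s\in S_n} c(s)| \leq M|S_n|$, so $\beta^S_c(n) \leq M$. The lower bound $0 \preceq \beta^S_c$ just records that $\beta^S_c$ takes nonnegative real values, which is part of the definition of the asymptotic comparison.

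For part (ii), I would prove the contrapositive. Suppose $[c] = 0$. By Theorem~\ref{t:whyte} there exist $C, r \in \N_{>0}$ with $|\sum_{s\in S}c(s)| \leq C\cdot|\partial_r S|$ for every finite $S$. Applying this to each F\o lner set $S_n$ and dividing by $|S_n|$ gives $\beta^S_c(n) \leq C\cdot|\partial_r S_n|/|S_n|$. The only subtlety is to compare $|\partial_r S_n|$ with $|\partial_1 S_n|$: since we work in a word metric, the ball of radius $r$ is contained in a bounded number of translates of the radius-one ball, so there is a constant $D$ (depending only on $r$ and the generating set) with $|\partial_r S_n| \leq D\cdot|\partial_1 S_n|$. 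Hence $\beta^S_c \preceq \sigma_S$, contradicting $\beta^S_c \succ \sigma_S$. This gives $[c]\neq 0$.

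For part (iii), the converse, I expect the main obstacle. The hypothesis $[c]\neq 0$ means Whyte's inequality fails, so for every $C,r$ there is a finite set witnessing failure. I would use this to extract, for a fixed small $r$ (say $r=1$), a sequence of finite sets $S'_j$ with $|\sum_{s\in S'_j}c(s)|/|\partial_1 S'_j|\to\infty$. The difficulty is that these witnessing sets need not be F\o lner, so I must upgrade them to a genuine F\o lner sequence while preserving the growth of $\beta_c$ relative to $\sigma$. The cleanest route is to argue that one may intersect or combine the witnessing sets with large F\o lner sets, or pass to the sets $S'_j$ directly after verifying they can be chosen F\o lner; a careful diagonal extraction should produce $S'$ with $\sigma_{S'}\prec\beta^{S'}_c$. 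For part (iv), I would argue by contradiction: a nontrivial relation $\sum_{i=0}^{N}\lambda_i[c_i]=0$ with $\lambda_N\neq 0$ forces $[\sum\lambda_i c_i]=0$, whence by part (ii) applied in contrapositive form $\beta^S_{\sum\lambda_i c_i}\preceq\sigma_S\prec\beta^S_{c_0}\prec\cdots\prec\beta^S_{c_N}$; but the strict domination $\beta^S_{c_i}\prec\beta^S_{c_N}$ for $i<N$ together with $\lambda_N\neq 0$ shows $\beta^S_{\sum\lambda_i c_i}\sim\beta^S_{c_N}$ by a limit computation, contradicting $\beta^S_{c_N}\succ\sigma_S$. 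The linear independence follows.
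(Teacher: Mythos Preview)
Your treatment of (i) and (ii) matches the paper's. For (iv) your contradiction argument is sound in spirit but contains a small slip: the contrapositive of (ii) does \emph{not} give $\beta^S_{\sum\lambda_i c_i}\preceq\sigma_S$, since $\preceq$ requires the limit of the ratio to exist. What the proof of (ii) actually yields (via Whyte) is that $\beta^S_a/\sigma_S$ is \emph{bounded} whenever $a$ is a boundary. This is enough: combined with $\sigma_S\prec\beta^S_{c_N}$ it gives $\beta^S_{\sum\lambda_i c_i}/\beta^S_{c_N}\to 0$, while your limit computation shows the same ratio tends to $|\lambda_N|\neq 0$. The paper takes a slightly different route for (iv), building linear functionals $\gamma^S_{c_i}\colon H_0^{\uf}(G;\R)\to\R$ with $\gamma^S_{c_i}([c_j])=\delta_{ij}$ for $j\le i$; both arguments are equally short once the boundedness from Whyte is in hand.

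The real gap is in (iii). You write that ``these witnessing sets need not be F\o lner'' and propose to upgrade them by intersecting with large F\o lner sets or by a diagonal extraction---none of which you carry out. In fact no upgrade is needed: the witnessing sets \emph{are} automatically a F\o lner sequence, and this is the one-line observation you are missing. If for each $n$ you choose $S'_n$ with $\bigl|\sum_{s\in S'_n}c(s)\bigr|>n\cdot|\partial_1 S'_n|$, then since $\bigl|\sum_{s\in S'_n}c(s)\bigr|\le \|c\|_\infty\cdot|S'_n|$ you get
\[
\frac{|\partial_1 S'_n|}{|S'_n|}\;\le\;\|c\|_\infty\cdot\frac{|\partial_1 S'_n|}{\bigl|\sum_{s\in S'_n}c(s)\bigr|}\;<\;\frac{\|c\|_\infty}{n}\longrightarrow 0,
\]
so $(S'_n)_{n\in\N}$ is F\o lner (the general-$r$ condition follows from $|\partial_r S'_n|\le |B_r(e)|\cdot|\partial_1 S'_n|$). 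This same chain of inequalities also gives $\sigma_{S'}\prec\beta^{S'}_c$ directly. The boundedness of $c$ is doing all the work here; once you use it, (iii) is immediate.
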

\begin{proof}\hfill
 \begin{enumerate}
  \item This is obvious by definition.
 \item Assume $[c]=0$. By Whyte's criterion there exist $C,r\in\N_{>0}$ such that for all $n\in\N$
\begin{align*}
 0<\frac1C\leq \frac{|\partial_r S_n|}{\left|\sum_{s\in S}c(s)\right|}\leq |B_r(e)|\cdot \frac{|\partial_1 S_n|}{\left|\sum_{s\in S}c(s)\right|}.
\end{align*}
Hence $\beta^S_c\not\succ \sigma_S$. 
\item By Whyte's criterion, if $[c]\neq 0$,  for each $n\in\N$ there exists a finite subset $S'_n\subseteq G$  such that
\begin{align*}
 \Bigl|\sum_{s\in S'_n}c(s)\Bigr|> n\cdot |\partial_1 S'_n|;
\intertext{hence}
\frac{1}{\|c\|_{\infty}}\cdot \frac{|\partial_1 S'_n|}{ |S'_n|}\leq \frac{|\partial_1 S'_n|}{ \left|\sum_{s\in S'_n}c(s)\right|}<\frac1n.
\end{align*}
In particular $S':=(S'_n)_{n\in\N}$ is a F\o lner sequence and $\sigma_{S'}\prec \beta_c^{S'}$.
\item For each $c\in \ell^{\infty}(G)$ define the subspace
\[
C_0^{\uf}(G;\R)^{\preceq c}:=\Bigl\{c'\in C_0^{\uf}(G;\R)\Bigm | \lim_{n\to\infty} \frac{\sum_{s\in S_n}c'(s)}{\sum_{s\in S_n}c(s)} \text{ exists}\Bigr\}.
\]
Choose a splitting of $\R$-vector spaces $C_0^{\uf}(G;\R)= C_0^{\uf}(G;\R)^{\preceq c}\oplus V$ and define a linear map
\begin{align*}
 \gamma^S_c \colon C_0^{\uf}(G;\R) = C_0^{\uf}(G;\R)^{\preceq c}\oplus V&\longrightarrow \R\\
 (c',c'') &\longmapsto \lim_{n\to \infty} \frac{\sum_{s\in S_n}c'(s)}{\sum_{s\in S_n}c(s)}.
\end{align*}
If $a\in C_0^{\uf}(G;\R)$ is a boundary, the function $\beta_a^S/\sigma_S$ is bounded by Whyte's criterion. Therefore, if $ \sigma_S\prec \beta^S_c$ then $\beta^S_a\prec \beta^S_c$ and hence $\gamma^S_c(a) =0$. Thus $\gamma^S_c$ induces a map 
\[
\overline{\gamma^S_c}\colon H_0^{\uf}(G;\R)\longrightarrow \R.
\]
In our situation we have $\gamma^S_{c_i}(c_j) = \delta_{ij}$ for all $j\leq i$ in $\N$. Hence, $([c_n])_{n\in\N}$ is linearly independent.  \qedhere
 \end{enumerate}
\end{proof}
\begin{exa}
 The sequence $(\{n^k\mid n\in\N\})_{k\in\ \N_{>0}}$ of subsets of $\Z$ satisfies the conditions of the last part of Lemma~\ref{l:growth}, and hence induces a sequence of linearly independent classes in $H_0^{\uf}(\Z;\R)$.
 \end{exa}

\subsection{Sparse Classes}

We now introduce a geometric condition that will ensure that a subset is mean-invisible:
\begin{def.}
 Let $G$ be a finitely generated group. We call a subset $\Gamma \subseteq G$ \emph{(asymptotically) sparse} if 
\begin{align*}
 \exi{C\in\N} \fa{r\in\N_{>0}}\exi{R\in \N_{>0}} \fa{g\in G\setminus B_R(e)} |\Gamma\cap B_r(g)| \leq C.
\end{align*}
\end{def.}

\begin{exa}
 For each $k\in\N_{>1}$ the subsets $\{n^k\mid n\in \N\}\subseteq \Z$ are  sparse. 
\end{exa}

\begin{lemma}
 If $\Gamma \subseteq G$ is  sparse and $G$ a finitely generated infinite amenable group we have
\[
 [\chi_\Gamma] \in \widehat{H}_0^{\uf}(G;\R).
\]
\end{lemma}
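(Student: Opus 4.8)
The goal is to show that if $\Gamma\subseteq G$ is sparse, then $[\chi_\Gamma]\in\widehat{H}_0^{\uf}(G;\R)$, i.e. that $\overline{m}_0([\chi_\Gamma])=0$ for every left $G$-invariant mean $m\in M(G)$. Since $\overline{m}_0$ coincides with the corresponding right-invariant mean evaluated on $\chi_\Gamma$ (up to the inversion $\rho_0$), it suffices to show that $m(\chi_\Gamma)=0$ for every invariant mean $m$. The plan is to exploit the sparseness condition to cover the group (outside a bounded region) by disjoint translates of $\Gamma$ in a way that forces the measure $\mu(\Gamma)=m(\chi_\Gamma)$ to be squeezed to zero.

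The key idea is the following packing argument. Fix the constant $C\in\N$ coming from sparseness. I would argue that for any $N\in\N$ one can find group elements $g_1,\dots,g_N\in G$ such that the translates $g_i\Gamma$ are ``essentially disjoint'' away from a bounded ball, in the sense that each point of $G\setminus B_R(e)$ lies in at most $C$ of the sets among a suitable collection. More precisely, using the sparseness condition with $r$ large enough that the chosen translating elements pairwise separate the copies of $\Gamma$, I would produce $N$ translates whose overlaps are absorbed into a fixed bounded set. Then, applying the invariant finitely additive probability measure $\mu$ induced by $m$ (as in the proof of Lemma~\ref{separate}, using that $\mu$ is $G$-invariant so $\mu(g_i\Gamma)=\mu(\Gamma)$ for all $i$, and that $\mu$ vanishes on any finite set), one obtains
\[
N\cdot\mu(\Gamma)=\sum_{i=1}^N\mu(g_i\Gamma)\leq C\cdot\mu(G)=C.
\]
Since this holds for every $N$, it forces $\mu(\Gamma)=0$, hence $m(\chi_\Gamma)=0$.

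The main obstacle is making the ``essentially disjoint translates'' precise while respecting the order of quantifiers in the definition of sparse: the constant $C$ is uniform, but the radius $R$ depends on $r$, which in turn must be chosen large enough to separate the $N$ translates. I would handle this by first selecting the $N$ translating elements $g_1,\dots,g_N$ so that their pairwise ``distances'' exceed twice the diameter of the relevant portions of $\Gamma$ being compared, then invoking sparseness with an $r$ tuned to this configuration. The bounded-overlap bound then says that any point outside $B_R(e)$ meets at most $C$ of the translates $g_i\Gamma$, while points inside $B_R(e)$ contribute a finite (hence $\mu$-null) set. Care is needed to ensure the translating elements can indeed be chosen to spread the copies of $\Gamma$ apart; since $G$ is infinite this is possible, and one may reuse the separation technique of Lemma~\ref{separate} or argue directly using that $\Gamma$ has bounded local cardinality. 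Finally, transporting the conclusion back through $\rho_0$ and Remark~\ref{t:Mattgias} gives $\overline{m}_0([\chi_\Gamma])=0$ for all $m$, so $[\chi_\Gamma]\in\widehat{H}_0^{\uf}(G;\R)$ as claimed.
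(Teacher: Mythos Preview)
Your packing strategy---find many translates of $\Gamma$ with bounded overlap and conclude $N\cdot\mu(\Gamma)\le C$---is exactly the paper's idea, but your plan for producing those translates is muddled. Talking about choosing $g_1,\dots,g_N$ with ``pairwise distances exceeding twice the diameter of the relevant portions of $\Gamma$'' does not make sense, since $\Gamma$ is typically unbounded; and Lemma~\ref{separate} concerns cosets of a subgroup, not sparse sets, so it does not help here. There is also a left/right mismatch: with a left-invariant mean and left translates $g_i\Gamma$, membership $g\in g_i\Gamma$ reads $g_i^{-1}g\in\Gamma$, and for $g_i\in B_r(e)$ the elements $g_i^{-1}g$ range over $B_r(e)\cdot g$, which is \emph{not} the metric ball $B_r(g)=g\cdot B_r(e)$ in a non-abelian group, so the sparseness bound does not apply directly.

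The resolution is much simpler than you propose, and is precisely what the paper does. Work with the right-invariant mean $\overline m$ and take the translating set to be \emph{all} of $B_r(e)$. Then $g\in\Gamma\cdot h$ iff $gh^{-1}\in\Gamma$, and as $h$ ranges over $B_r(e)$ the point $gh^{-1}$ ranges bijectively over $g\cdot B_r(e)=B_r(g)$; hence the coefficient of $g$ in $\sum_{h\in B_r(e)}\chi_{\Gamma\cdot h}$ is exactly $|\Gamma\cap B_r(g)|$, which sparseness bounds by $C$ once $g\notin B_R(e)$. After discarding the finite (hence $\overline m$-null) set $\Gamma\cap B_R(e)$ one obtains
\[
|B_r(e)|\cdot\overline m(\chi_\Gamma)=\sum_{h\in B_r(e)}\overline m(\chi_{\Gamma\cdot h})=\overline m\Bigl(\sum_{h\in B_r(e)}\chi_{\Gamma\cdot h}\Bigr)\le C,
\]
and letting $r\to\infty$ (so $|B_r(e)|\to\infty$, as $G$ is infinite) forces $\overline m(\chi_\Gamma)=0$. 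No separation lemma and no delicate selection of translates is needed: the sparseness hypothesis is tailor-made for the translating set $B_r(e)$ itself.
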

\begin{proof}
 Let $m$ be a left-invariant mean on $G$ and $C\in\N$ as in the definition of sparse. Let  $r\in\N_{>0}$. Here we write $\chi(S):=\chi_S$ for the characteristic function of any subset $S\subseteq G$. Since $G$ is infinite we have for all $R\in \N_{>0}$ that $\overline{m}(\chi(\Gamma)) = \overline{m}(\chi(\Gamma\setminus B_R(e)))$. Hence we may assume that for all $g\in G$
\[
|\{\gamma \in \Gamma \mid \exi{h\in B_r(e)} \gamma\cdot h = g\} |= |\Gamma\cap B_r(g)|\leq C.
\]
Therefore the coefficients of $\sum_{h\in B_r(e)} \chi(\Gamma\cdot h)$ are bounded by $C$.
So we see that
\[
|B_r(e)|\cdot \overline{m}(\chi(\Gamma))=\sum_{h\in B_r(e)} \overline{m}(\chi(\Gamma\cdot h))=\overline{m}\left(\sum_{h\in B_r(e)} \chi(\Gamma\cdot h)\right)\leq C.
\]
So $\overline{m}(\chi(\Gamma)) = 0$. 
\end{proof}
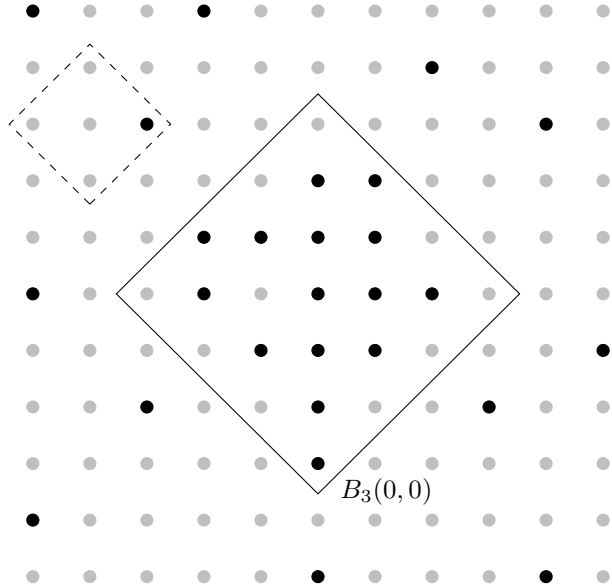
\begin{figure}[here]
\begin{center}
  \begin{tikzpicture}[scale = .75]
\foreach \x in {-5,-4,...,5} {
\foreach \y in {-5,-4,...,5}{
\node (a\x\y) at (\x,\y)  {};
\draw[fill, color= lightgray] (a\x\y) circle (3pt);
}
}
\draw[fill] (a00) circle (3pt);
\draw[fill] (a01) circle (3pt);
\draw[fill] (a10) circle (3pt);
\draw[fill] (a-1-1) circle (3pt);
\draw[fill] (a1-1) circle (3pt);
\draw[fill] (a0-1) circle (3pt);
\draw[fill] (a-20) circle (3pt);
\draw[fill] (a20) circle (3pt);
\draw[fill] (a0-2) circle (3pt);
\draw[fill] (a02) circle (3pt);
\draw[fill] (a11) circle (3pt);
\draw[fill] (a-11) circle (3pt);
\draw[fill] (a-55) circle (3pt);
\draw[fill] (a-25) circle (3pt);
\draw[fill] (a-33) circle (3pt);
\draw[fill] (a0-5) circle (3pt);
\draw[fill] (a-50) circle (3pt);
\draw[fill] (a-3-2) circle (3pt);
\draw[fill] (a-5-4) circle (3pt);
\draw[fill] (a4-5) circle (3pt);
\draw[fill] (a43) circle (3pt);
\draw[fill] (a5-1) circle (3pt);
\draw[fill] (a3-2) circle (3pt);
\draw[fill] (a24) circle (3pt);
\draw[fill] (a-21) circle (3pt);
\draw[fill] (a0-3) circle (3pt);
\draw[fill] (a12) circle (3pt);
\draw[rotate=45] (-2.5,-2.5)rectangle (2.5,2.5);
\draw[dashed, xshift=-4cm,yshift=3cm,rotate=45] (-1,-1)rectangle (1,1);
\draw (1.2,-3.1) node[below]  {$B_3(0,0)$};
 \end{tikzpicture}
\end{center}\caption{The black dots form a sparse subset in $\Z^2$ with the standard word metric.}
\end{figure}
\subsection{Constructing sparse classes}
We recall the notion of tilings, who will be the building blocks for our sparse classes:
\begin{def.}
 Let $G$ be a finitely generated group with the word metric. For $r\in\N_{>0}$ we call a subset $T\subseteq G$ an \emph{$r$-tiling for $G$} if
\begin{enumerate}
\item $ \fa{g_1,g_2\in T} B_r(g_1)\cap B_r(g_2)\neq \emptyset \Longrightarrow g_1=g_2$
\item $G= \bigcup_{g\in T} B_{2\cdot r}(g)$.
\end{enumerate}
By Zorn's Lemma for all $r\in\N_{>0}$ there exists an $r$-tiling~\cite[Proposition 5.6.3]{CC}.
\end{def.}
\begin{lemma}[{\cite[Proposition 5.6.4]{CC}}]\label{t:tullio}
 Let $G$ be a finitely generated amenable group and $(S_j)_{j\in\N}$ a F\o lner sequence. Let $r\in\N_{>0}$ and let $T$ be an $r$-tiling for $G$. Set \[T_j:= \{g\in T\mid B_r(g) \subseteq S_j\}\subseteq S_j.\]
Then there exists an $l(T)\in\N$ such that for all $j\geq l(T)$
\[
 \frac{1}{|2\cdot B_{2\cdot r}(e)|} \leq \frac{|T_j|}{|S_j|}\leq  \frac{1}{|B_{r}(e)|}.
\]

\end{lemma}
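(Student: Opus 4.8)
The plan is to prove the two inequalities separately, with the upper bound being essentially a packing estimate valid for every $j$, and the lower bound requiring the F\o lner property to absorb a boundary term. Throughout I would use that left translation is an isometry of the word metric, so that $B_\rho(g) = g\cdot B_\rho(e)$ and in particular every ball of radius $\rho$ has cardinality $|B_\rho(e)|$.

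For the upper bound, first I would observe that by tiling condition (i) the balls $\bigl(B_r(g)\bigr)_{g\in T}$ are pairwise disjoint, and by the definition of $T_j$ each $B_r(g)$ with $g\in T_j$ is contained in $S_j$. Hence $\bigsqcup_{g\in T_j}B_r(g)\subseteq S_j$ is a disjoint union of $|T_j|$ balls, each of cardinality $|B_r(e)|$, which immediately gives $|T_j|\cdot|B_r(e)|\leq |S_j|$, i.e.\ the right-hand inequality. This step holds for all $j$ and needs no asymptotics.

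For the lower bound, I would introduce the slightly larger index set $T'_j:=\{g\in T\mid B_{2r}(g)\cap S_j\neq\emptyset\}$. Tiling condition (ii) says the balls $B_{2r}(g)$ cover $G$, hence they cover $S_j$, so $S_j\subseteq\bigcup_{g\in T'_j}B_{2r}(g)$ and therefore $|S_j|\leq |T'_j|\cdot|B_{2r}(e)|$. The crux is then to bound the defect $|T'_j\setminus T_j|$. For $g\in T'_j\setminus T_j$ I would choose a point $p_g\in B_r(g)\setminus S_j$ (which exists since $g\notin T_j$) and a point $q_g\in B_{2r}(g)\cap S_j$ (which exists since $g\in T'_j$); the triangle inequality gives $d(p_g,q_g)\leq 3r$, so $0<d(p_g,S_j)\leq 3r$, i.e.\ $p_g\in\partial_{3r}(S_j)$. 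Because the balls $B_r(g)$ are pairwise disjoint, the assignment $g\mapsto p_g$ is injective, whence $|T'_j\setminus T_j|\leq|\partial_{3r}(S_j)|$. Combining the two displays yields $|T_j|\geq |S_j|/|B_{2r}(e)| - |\partial_{3r}(S_j)|$.

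Finally, the F\o lner condition applied with radius $3r$ gives $|\partial_{3r}(S_j)|/|S_j|\to 0$, so I would pick $l(T)$ large enough that $|\partial_{3r}(S_j)|\leq \tfrac12|S_j|/|B_{2r}(e)|$ for all $j\geq l(T)$; this absorbs the boundary term and yields $|T_j|\geq |S_j|/\bigl(2|B_{2r}(e)|\bigr)$, which is the asserted left-hand inequality. The main obstacle is the lower bound, and specifically the injective matching of the ``bad'' tiles in $T'_j\setminus T_j$ with genuine boundary points $p_g\in\partial_{3r}(S_j)$; once that injection is in place the F\o lner property does the rest, and the disjointness of the $r$-balls is exactly what makes the matching injective.
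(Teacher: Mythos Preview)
The paper does not supply its own proof of this lemma; it is simply quoted from \cite[Proposition~5.6.4]{CC}. Your argument is correct and is essentially the standard proof found in that reference: the packing bound from tiling condition~(i) gives the upper inequality for all $j$, while the covering bound from condition~(ii) together with an injective assignment of ``bad'' tiles $g\in T'_j\setminus T_j$ to points of $\partial_{3r}(S_j)$ gives the lower inequality once the F\o lner condition makes the boundary term negligible. (Note also that the expression $|2\cdot B_{2r}(e)|$ in the statement is to be read as $2\cdot|B_{2r}(e)|$, as the subsequent use of the lemma in the proof of Theorem~\ref{t:sparse} confirms; this is exactly the bound you obtain.)
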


\begin{rem}\label{r:remark}
 The idea of the following proof of Theorem~\ref{t:sparse} is: Let $(S_j)_{j\in \N}$ be a F\o lner sequence. For each ``ring'' $S_j\setminus S_{j-1}$ we choose a radius $r(j)\in\N_{>0}$ and put part of an $r(j)$-tiling on this ring, see Figure~\ref{f:2}. By letting the sequence $(r(j))_{j\in\N}$ go to infinity, we make sure to get a sparse subset. On the other hand, we let $(r(j))_{j\in\N}$ grow just slowly enough to be sure to get a non-trivial class in $H_0^{\text{uf}}(G;\R)$. We then vary the growth of the sequence $(r(j))_{j\in\N}$ and use Lemma~\ref{l:growth} to get an infinite family of linearly independent classes in $H_0^{\text{uf}}(G;\R)$.  
\end{rem}
\begin{figure}[here]
\begin{center}
  \begin{tikzpicture}[scale = .75]
\foreach \x in {-5,-4,...,5} {
\foreach \y in {-5,-4,...,5}{
\node (a\x\y) at (\x,\y)  {};
\draw[fill, color= lightgray] (a\x\y) circle (3pt);
}
}
\draw[fill] (a00) circle (3pt);
\draw[fill] (a01) circle (3pt);
\draw[fill] (a10) circle (3pt);
\draw[fill] (a-10) circle (3pt);
\draw[fill] (a0-1) circle (3pt);
\draw[fill] (a-40) circle (3pt);
\draw[fill] (a-31) circle (3pt);
\draw[fill] (a-22) circle (3pt);
\draw[fill] (a-13) circle (3pt);
\draw[fill] (a04) circle (3pt);
\draw[fill,yshift=-1cm,xshift=1cm] (-4,0) circle (3pt)
 (-3,1) circle (3pt)
 (-2,2) circle (3pt)
 (-1,3) circle (3pt)
 (0,4) circle (3pt);
\draw[fill,yshift=-1cm,xshift=1cm] (-4,0) circle (3pt)
 (-3,1) circle (3pt)
 (-2,2) circle (3pt)
 (-1,3) circle (3pt)
 (0,4) circle (3pt);
\draw[fill,yshift=-2cm,xshift=2cm] (-4,0) circle (3pt)
 (-3,1) circle (3pt)
 (-2,2) circle (3pt)
 (-1,3) circle (3pt)
 (0,4) circle (3pt);
\draw[fill,yshift=-3cm,xshift=3cm] (-4,0) circle (3pt)
 (-3,1) circle (3pt)
 (-2,2) circle (3pt)
 (-1,3) circle (3pt)
 (0,4) circle (3pt);
\draw[fill,yshift=-4cm,xshift=4cm] (-4,0) circle (3pt)
 (-3,1) circle (3pt)
 (-2,2) circle (3pt)
 (-1,3) circle (3pt)
 (0,4) circle (3pt);
\draw[fill,yshift=-0cm,xshift=0cm] 
(-5,5) circle (3pt)
 (-5,2) circle (3pt)
 (-5,-1) circle (3pt)
 (-5,-4) circle (3pt)
 (-2,5) circle (3pt)
 (1,5) circle (3pt)
 (4,5) circle (3pt)
 (4,2) circle (3pt)
 (4,-1) circle (3pt)
 (4,-4) circle (3pt)
 (-2,-4) circle (3pt)
 (1,-4) circle (3pt)
 (0,4) circle (3pt);
\draw[rotate=45] (-3.25,-3.25)rectangle (3.25,3.25);
\draw[rotate=45] (-1,-1)rectangle (1,1);
 \end{tikzpicture}
\end{center}\caption{Construction of a sparse subset in $\Z^2$ (Remark~\ref{r:remark}).}\label{f:2}
\end{figure}
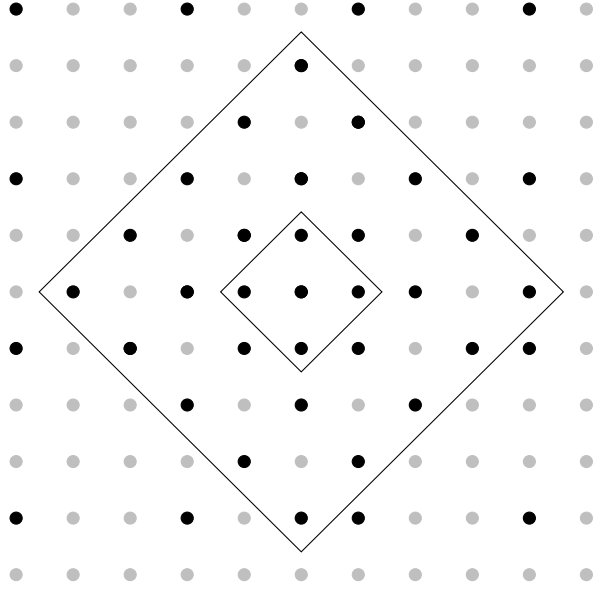
\begin{proof}[Proof of Theorem~\ref{t:sparse}]
 By a result of T. Adachi~\cite{A} there exists a monotonous and exhausting F\o lner sequence of $G$. Hence there is a F\o lner sequence $(S_j)_{j\in \N}$ such that for each $j\in\N$ there exists an $R(j)\in\N_{>0}$ such that
\begin{align*}
 S_{j-1}\subseteq B_{R(j)}(e) \subseteq B_{3\cdot R(j)}(e) \subseteq S_j. \tag{$\ast$}
\end{align*}
Furthermore we can assume $\lim_{j\to \infty} \frac{|S_{j-1}|}{|S_j|} = 0$.

Choose for each $r\in \N_{>0}$ an $r$-tiling~$T^r$ of $G$. For each $j\in \N_{>0}$ and each sequence $c=(c(l))_{l\in\N}$ of positive real numbers converging to 0 define $r(j,c)$ to be the maximal number  $r\in \{1,\dots, R(j)\}$ satisfying

\begin{align*}
 \frac{|B_r(e)|}{4\cdot |B_{2\cdot r}(e)|} &\geq \frac{|S_{j-1}|}{|S_j|}\\
\frac{1}{4\cdot| B_{2\cdot r}(e)|}&\geq \sqrt{c(j)}\\
\intertext{and}
l(T^r)&\leq j.
\end{align*}
If no such $r$ exists, we set $r(j,c)=1$. 

The conditions on $r(j,c)$ will force $(r(j,c))_{j\in\N}$ to grow slowly enough so that the class $\Gamma_c$ we are about to construct will satisfy $c\prec \beta^S_{\Gamma_c}$. 

By our assumption on $(S_j)_{j\in\N}$ we have $\lim_{j\to\infty} r(j,c) = \infty$. Define the subset
\begin{align*}
 \Gamma_c = T_{0}^{r(0,c)} \cup \bigcup_{j\in\N_{>0}} \left(T_j^{r(j,c)} \setminus T_{j-1}^{r(j,c)}\right)\subseteq G.
\end{align*}
For each $r\in\N$ consider $j\in\N$ such that $r(j,c)>2\cdot r$.  By condition $(\ast)$ we know that for each $g\in G\setminus S_{j}$ the ball $B_r(g)$ intersects at most two sets $S_{l}\setminus S_{l-1}$ and $S_{l+1}\setminus S_{l}$. Because of the tiling condition and since $r(j,c)>2\cdot r$ the ball also contains at most one point each from $T^{r(l+1,c)}_{l+1}\setminus T^{r(l+1,c)}_{l}$ and  $T^{r(l,c)}_{l}\setminus T^{r(l,c)}_{l-1}$ hence $|B_r(g)\cap \Gamma_c|\leq 2$, so $\Gamma_c$ is  sparse. We also have for all $j\in\N_{>0}$, by definition of $\Gamma$,
\begin{align*}
 \frac{|\Gamma_c\cap S_j|}{|S_j|}&\geq \frac{|T_j^{r(j,c)}|}{|S_j|}-\frac{|T_{j-1}^{r(j,c)}|}{|S_j|}
 \intertext{by Lemma~\ref{t:tullio}}
 &\geq \frac{1}{2\cdot |B_{2\cdot r(j,c)}(e)|}-\frac{|S_{j-1}|}{|B_{ r(j,c)}(e)|\cdot |S_j|}
 \intertext{and by definition of $r(j,c)$}
&\geq \frac{1}{4\cdot |B_{2\cdot r(j,c)}(e)|}\\
&\geq \sqrt{c(j)}.
\end{align*}
Therefore, $\beta_{\Gamma_c} \succ c$. 

Since $\Gamma_c$ is  sparse, $\lim_{j\to\infty} \beta_{\Gamma_c}(j) = \lim_{j\to\infty} |\Gamma_c\cap S_j|/|S_j| =0$.
Now inductively define a sequence $(\Gamma_n)_{n\in\N}$ of sparse subsets by setting $\Gamma_0 = \Gamma_{\sigma_S}$ and $\Gamma_{n+1} = \Gamma_{\beta^S_{\Gamma_{n}}}$ for all $n\in\N$. Then $\sigma_S\prec \beta_{\Gamma_0}\prec \beta_{\Gamma_1}\prec\cdots$. Hence, by Lemma~\ref{l:growth}, we have found a sequence of linearly independent classes in $\widehat{H}_0^{\uf}(G;\R)$.
\end{proof}

\end{document}